\documentclass[12pt]{article}

\usepackage[english]{babel}
\usepackage{amsfonts,amsmath, amssymb}
\usepackage{dutchcal}
\usepackage{hyperref}
\usepackage{csquotes}

\usepackage{xcolor}

\usepackage{enumerate}
\usepackage{epsf,epsfig,amsfonts,a4wide}
\usepackage{amsfonts, mathdots}
\usepackage{amsmath,amssymb,amsthm}
\usepackage{url}
\usepackage{amsmath,amssymb,amsthm}

\usepackage{amssymb}
\usepackage{amsthm}
\usepackage{epsf,epsfig,amsfonts,graphicx}

\newtheorem{Theorem}{Theorem}

\newtheorem{Lemma}{Lemma}[section]

\newtheorem{Corollary}{Corollary}[section]

\newtheorem{Ex}{Example}[section]
\newtheorem{Remark}{Remark}[section]

\theoremstyle{remark}

\newcommand{\be}{\begin{equation}}
\newcommand{\ee}{\end{equation}}

\newcommand{\R}{\mathbb{R}}\newcommand{\Id}{\textrm{\rm Id}}

\newcommand{\gl}{\mathrm{gl}}

\newcommand{\ddd}{\mathrm{d}}

\newcommand{\Sym}{\mathsf{Sym}}

\newcommand{\pd}[2]{\frac{\partial#1}{\partial#2}}

\newcommand{\Span}{\operatorname{Span}}

\newcommand{\dd}{{\mathrm d}\,}

\newcommand{\weg}[1]{}

\usepackage[
  backend=biber,
  style=numeric,
  sorting=nyt,
  isbn=false, doi=true, url=false  
]{biblatex}
\title{ Duality of operator Frobenius algebras  and solution of  Eisenhart-St\"ackel problem in the  non-diagonal case} 
\author{Alexey V.\ Bolsinov\footnote{School of Mathematics,
 Loughborough University,
 LE11 3TU, UK  and Institute of Mathematics and Mathematical Modeling, Almaty, Kazakhstan\ \ \quad {\tt A.Bolsinov@lboro.ac.uk}},    
Andrey Yu.\  Konyaev\footnote{Faculty of Mechanics and Mathematics and Center for Fundamental and Applied Mathematics, Moscow State University, 119992, Moscow, Russia   
 \ \ \quad{\tt  maodzund@yandex.ru}}  \, and 
    Vladimir S.\ Matveev\footnote{
Institut f\"ur Mathematik, Friedrich Schiller Universit\"at Jena,
07737 Jena,  Germany  \ \ \quad {\tt  vladimir.matveev@uni-jena.de}}}
\date{}

\begin{document}

\maketitle

\begin{abstract}
We study Frobenius algebras of operator fields and introduce a novel notion of duality for them. We show that, under the assumption that the operator fields forming the Frobenius algebra are mutual symmetries, the operator fields in the dual Frobenius algebra are also mutual symmetries. This result allows one to construct new infinite-dimensional integrable systems of hydrodynamic type starting from a given one. As the main application, we solve the long-standing Eisenhart--St\"ackel problem for any Segre characteristic and in arbitrary dimension: namely, we describe all nondegenerate finite-dimensional integrable systems whose integrals are quadratic in momenta such that the corresponding $(1,1)$-tensors commute as operator fields.
\end{abstract}

\section{Introduction}

We study $n$-dimensional operator Frobenius algebras on an $n$-dimensional manifold $\mathsf M^n$. An operator Frobenius algebra is an $n$-dimensional space  of operators on the manifold (= $(1,1)$-tensor fields) such that, at every point $x\in M$, its restriction to $T_x\mathsf M^n$ is an $n$-dimensional commutative associative Frobenius algebra with respect to a certain nondegenerate symmetric bilinear form.

Notice that operator Frobenius algebras are naturally related to the notion of an $F$-structure (introduced under the name ``weakly Frobenius structure'' in \cite{ManinHertling}), which is a pre-structure for the Dubrovin--Frobenius structure introduced in \cite{Dubrovin1, Dubrovin2}. Recall that in these references as well as in subsequent works on this widely studied topic (see, e.g., \cite{Hertlingbook}), one considers Frobenius algebras of vector fields on the manifold. If one chooses $n$ linearly independent vector fields $\xi_1,\dots,\xi_n$ on $\mathsf M$, then the right-multiplication operators $\mathsf R_{\xi_i}$ naturally span an operator Frobenius algebra in our sense.  Conversely, given an operator Frobenius algebra $\mathcal K=\Span_\R(K_1,\dots, K_n)$,  we can naturally define a commutative associative multiplication $\star$ on the tangent bundle by choosing a unity vector field $\xi_0$. Then any tangent vector $\eta$ can be uniquely written in the form $\eta=K_\eta \xi_0$ for some  $K_\eta \in \mathcal K$ and we can set by definition $\eta\star\zeta = K_\eta K_\zeta \xi_0$.  

The study of Frobenius and $F$-structures is motivated by their applications in the theory of integrable systems; so is our study of operator Frobenius algebras. Our paper contains two lines of applications of operator Frobenius algebras, one in the theory of integrable PDE systems of hydrodynamic type, and another in the theory of finite-dimensional integrable (Hamiltonian ODE) systems.

Recall that quasilinear systems of hydrodynamic type are evolutionary   PDE systems for the unknown vector-functions $u(x,t)= (u^1(x,t), \dots, u^n(x,t))^\top$ of the form
\begin{equation}\label{eq:K0}
u_t(x,t)= K( u(x,t)) \cdot u_x(x,t), 
\end{equation}
where ``$\cdot$'' denotes matrix multiplication of the $n\times n$ matrix $K$ whose components depend on   $u$   with the vector $u_x= \tfrac{\partial }{\partial x}u$. Note that  $K$  has the geometric nature of an operator field on a manifold with local coordinates $u^1,\dots, u^n$, since $K$ transforms according to the tensor rule for $(1,1)$-tensors under diffeomorphic changes of $u^1,\dots,u^n$. Such systems are of great interest for applications and  describe many physics-related processes, see e.g.  \cite{Lax, Serre1, Serre2,Serre3,  LeVe}. Their  study   is a prominent and classical topic, with first nontrivial results obtained already by B. Riemann in 1860 \cite{Riemann}.

 It is known that integrability of hydrodynamic type systems \eqref{eq:K0}  is closely related to the existence of the  so-called  mutual symmetries $K_1,\dots, K_n$, $K=K_1$ (the formal differential-geometric  definition will be given in Section \ref{s1}; in the language of PDEs, this means that  for any $i,j$  the operators commute algebraically and   the PDE systems  $u_{t_i} = K_i\cdot u_{x}$ and $u_{t_j} = K_j\cdot u_{x}$  on $u(x, t_i, t_j)$ are compatible). In particular, under the assumption that the operators are semi-simple,   recent results  from \cite{BKM2026} combined with those of   \cite{tsarev} imply that 
the   system  \eqref{eq:K0} can be solved  in quadratures. See also  \cite{LPG2024} for a   discussion of the non-semisimple case. 

The  application of operator  Frobenius algebras in the theory of integrable systems of hydrodynamic type is based on the following construction. For an $n$-dimensional operator Frobenius algebra $\mathcal K=\textrm{Span}(K_1,\dots, K_n)$ we construct,  by a purely algebraic procedure,  an operator Frobenius algebra $\mathcal M$ which we call  \emph{dual}  to $\mathcal K$. We show in Theorem \ref{t2} that if the operators $K_i$ and $K_j$ are mutual symmetries for all $i,j=1,\dots, n$, then any two elements of $\mathcal M$ are mutual symmetries also.  

Under the assumption that one of the operators $K_1,\dots, K_n$ is the identity operator, the construction enjoys the standard property of  {\it duality} in the sense that the dual algebra to $\mathcal M$ is $\mathcal K$, that is, $(\mathcal K^*)^*=\mathcal K$. Moreover, in this case we express common symmetries of $\mathcal K$ in terms of common conservation laws of $\mathcal M$, and vice versa. As a byproduct we show that any operator Frobenius algebra generated by mutual symmetries admits $n$ independent conservation laws.

Our construction allows one to provide many new examples of integrable systems of hydrodynamic type, in all dimensions, such that the generator $K$ of the corresponding system \eqref{eq:K0} has Jordan blocks of arbitrary dimension. Only a few such examples are known in literature, see Introduction of \cite{LPG2024} for a list of  examples, some of which come from mathematical physics.  In fact, even if we start with something fairly simple, say commuting Nijenhuis operators,  the generators of the dual integrable systems will be quite nontrivial and not Nijenhuis operators anymore. 
In the diagonal case, the obtained systems are not new. They have been studied, e.g., in \cite{FeFo,BlMa}; in the presence of Jordan block(s) most of them are new.

The second  application, which we view as the main result of the paper, relates to the theory of finite-dimensional integrable systems. We study Hamiltonian systems on the cotangent bundle $T^*\mathsf M^n$ defined by $n$ Poisson commuting functions $F_1,\dots,F_n$ that are quadratic in momenta $p$ with coefficients depending on the position. We assume that one of them is nondegenerate, so that one can view it as a pseudo-Riemannian metric  $g=(g^{ij})$. Then, the Poisson commuting functions $F_\alpha$ have the form
\begin{equation}\label{eq:formmetric}
F_\alpha:T^*\mathsf M^n\to \mathbb{R}, \ \  F_\alpha(p,u) = \tfrac{1}{2} g(K_\alpha^* p, p)   \ \  \alpha=1,\dots, n.
\end{equation}
Here $K_\alpha=K_\alpha(u)$ are operator fields self-adjoint with respect to the metric $g=g(u)$, with $K_1=\Id$. We additionally assume that the operator fields commute in the algebraic sense, i.e., $K_i\cdot K_j=K_j\cdot K_i$.

Such a situation is well understood in the case when the operators $K_\alpha$ are diagonalisable. It is closely related to orthogonal separation of variables which is an effective and widely used tool in mathematical physics, with first results obtained already by C. Jacobi \cite{Jacobi}.  A milestone result in this direction was made by P. St\"ackel in 1891 \cite{staeckel}, who constructed an example of such integrable systems in all dimensions. The St\"ackel  construction is explicit, and the freedom is the choice of $n(n+1)$ functions of one variable. The operators $K_\alpha$ coming from it are simultaneously diagonalisable and therefore commute.

L. Eisenhart in 1934 \cite{Eisenhart34} asked the question under which conditions the existence of Poisson commuting integrals of the form \eqref{eq:formmetric} would imply that the integrals and the metric come from the St\"ackel construction, and gave first results in this direction.   The question was answered in full generality by E. Kalnins and W. Miller  in \cite{KM1980}, where it was  proved that linearly and functionally independent Poisson commuting integrals  \eqref{eq:formmetric} necessarily come from the St\"ackel construction if one assumes that at every point $\mathsf p\in\mathsf M$ there exists a basis in $T_{\mathsf p}\mathsf M^n$ in which the operators $K_\alpha$ are all diagonal . 

It is an interesting, natural and actively studied (in particular in the framework of general relativity, see e.g.  \cite{KoPa}) problem to generalize the results of 
St\"ackel,  Kalnins
and Miller by merely requiring that  the operators $K_\alpha$ commute in the algebraic sense.

In our paper we solve this problem in full generality. Our approach in some sense repeats the way of St\"ackel$\to$(Eisenhart)$\to$Kalnins--Miller. Namely, in Theorem \ref{t5} we describe a construction of such quadratic-in-momenta functions $F_\alpha$.  The  first step  of the construction is a choice 
of an   $n$-dimensional operator  Frobenius  algebra $\mathcal M=\textrm{Span}(M^1,\dots, M^n)$ of mutual symmetries consisting of Nijenhuis operators.  The second ingredient is a common conservation law $\alpha$  of $M^i$'s such that the differential forms $M^{i*}\alpha$ are linearly independent at every point.  Theorems \ref{t3} and  \ref{t4} explain how to construct $\mathcal M$ and why $\alpha$ exists.     
Then, Theorem  \ref{t5} constructs, by an algebraic formula involving $M^i$ and $\alpha$, 
Poisson commuting independent functions $F_1,\dots, F_n$ of the  form \eqref{eq:formmetric}.

Next, in Theorem \ref{t6}, we prove that any nondegenerate  collection of Poisson commuting independent functions of the form \eqref{eq:formmetric} with algebraically commuting operators $K_\alpha$ comes from the construction in Theorem \ref{t5}. 

Let us now explain how the solution of this generalised St\"ackel--Eisenhart--Kalnins--Miller problem is related to operator Frobenius algebras and their duality. Our proof consists of two steps. First, we show that the operators $K_\alpha$ are mutual symmetries and generate an operator Frobenius algebra, which naturally contains the identity operator. Next we show that the dual of this operator algebra consists of Nijenhuis operators. This correspondence also works in the other direction: for an $n$-dimensional symmetry subalgebra and a choice of a regular common conservation law, which fixes a bilinear form of the Frobenius metric and therefore a metric on the manifold, we obtain Poisson commuting integrals. Note that although the scheme of the proof is quite simple, the proof is nontrivial in both directions, with the duality of operator Frobenius algebras playing the major role.

The paper is organized as follows:
\begin{itemize}
    
    \item Section \ref{s1} is purely algebraic: it deals with the algebraic aspects of our construction. We introduce the notion of regular representation for a commutative Frobenius algebra and provide the criterion for a given subspace $\mathfrak A \subset \mathfrak{gl}(\mathbb K^n)$ to be the image of such a representation. Next, we introduce the notion of dual bases in a  Frobenius algebra. 
    \item Section \ref{s2} is devoted to the geometric properties of dual Frobenius operator algebras $\mathcal K=\Span_{\R}(K_1,\dots,K_n)$ and $\mathcal M = \Span_{\R}(M^1,\dots,M^n)$. We show that, under the condition that $K_i$'s are mutual symmetries, the same property holds for $M^j$'s and, moreover, there is a natural one-to-one correspondence between common symmetries of $K_i$'s and common conservation laws of their dual $M^i$'s and vice versa.
    
    \item Section \ref{s3} is dedicated to the  theory of symmetry algebras $\Sym=\Sym(\mathcal M)$  for Frobenius operator algebras $\mathcal M =\Span_{\R}(M_1,\dots, M^n)$ whose elements are mutual symmetries and, in addition, are all Nijenhuis operators. We show that $\Sym$ is an infinite dimensional commutative subalgebra of the algebra of operator fields on $\mathsf M^n$. We describe it explicitly, study common conservation laws for all elements of $\Sym$, and introduce the important notion of flat symmetry algebras.
    
    \item In Section \ref{s4}, we first  present a construction of finite dimensional integrable systems such that the corresponding integrals are quadratic in momenta. The Killing $(1,1)$-tensors $K_1,\dots, K_n$ corresponding to these integrals algebraically commute and are generic in the sense that the vectors $K_1\xi, \dots, K_n\xi$ are linearly independent for a generic tangent vector $\xi$.  Our method uses the symmetry algebras $\Sym=\Sym(\mathcal M)$ from Section \ref{s3} and is a generalization of the construction appeared in \cite{st}. Then we show that every integrable system of this type with algebraically commuting Killing $(1,1)$-tensors is obtained via our  construction. This yields the solution to the St\"ackel-Eisenhart-Kalnins-Miller problem in the general (not necessarily diagonal) case.
    
\end{itemize}

\noindent{\bf Acknowledgments.}  A.\,B. was       supported by the Ministry of Science and Higher Education of the Republic of Kazakhstan (grant No. AP23483476), and   V.\,M. by  the DFG (project  529233771) and the ARC Discovery Programme DP210100951.


\section{Frobenius algebras and their regular representations} \label{s1}

In this paper,  a Frobenius algebra $(\mathfrak a,\star)$ is a finite-dimensional unital commutative associative algebra $\mathfrak a$ over the field $\mathbb K = \mathbb R$ or $\mathbb C$ possessing a symmetric bilinear form $b(\cdot, \cdot)$ such that
$$
b(\xi \star \eta, \zeta)= b(\eta, \xi\star\zeta).
$$
The form $b$ is not unique.  The only important property is that such a form exists, and the choice of $b$, in our paper, will depend on the situation.
 
For an arbitrary $\xi \in \mathfrak a$, define the operator of (right) multiplication $\mathsf R_\xi: \mathfrak a \to \mathfrak a$ as 
$$
\mathsf R_\xi \eta = \eta \star \xi. 
$$
The correspondence  $\xi \to \mathsf R_\xi$ can be understood as a representation of $\mathfrak a$:
$$
\mathsf R : \mathfrak a \to \gl(\mathfrak a),
$$
 known as {\it regular} representation. Notice that the regular representation is faithful since for the unity $e\in \mathfrak a$ we have $\mathsf R_\xi e = \xi$ so that $\mathsf R_\xi=0$ if and only if $\xi=0$. Hence the image of $\mathsf R$ is a commutative subalgebra in  $\gl(\mathfrak a)$ isomorphic to $\mathfrak a$. 

Now let $V$ be an $n$-dimensional space over $\mathbb R$ or $\mathbb C$ and $K_1, \dots, K_n\in \gl(V)$ be $n$ linearly independent  commuting operators. 
The following theorem provides necessary and sufficient conditions for  $\mathfrak A=\Span(K_1, \dots, K_n)$ to be the image of the regular representation of a certain $n$-dimensional Frobenius algebra $\mathfrak a$ (under a suitable identification  $\mathfrak a \simeq V$).  

\begin{Theorem}\label{t1} Let $K_1,\dots, K_n: V \to V$, $\dim V=n$, be commuting operators. 
 The subspace $\mathfrak A=\Span(K_1, \dots, K_n) \subset \gl (V)$  is the image of the regular representation of an $n$-dimensional Frobenius algebra $\mathfrak a$  (under a certain identification $\mathfrak a \simeq V$) if and only if the following two conditions hold:
\begin{itemize}
\item[\rm(A1)] There exists $\xi\in V$ such that  
$$
\Phi_\xi :  \mathfrak A \to V,  \quad \Phi_\xi (M) = M\xi,
$$ 
is an isomorphism of vector spaces or, equivalently, $K_1\xi, \dots, K_n\xi$ are linearly independent. 

\item[\rm(A2)]  There exists $a\in V^*$ such that 
$$
\Phi^*_a :  \mathfrak A \to V^*,  \quad \Phi^*_a (M) = M^*a,
$$
 is an isomorphism of vector spaces or, equivalently, $K_1^*a, \dots, K_n^*a$ are linearly independent.
 \end{itemize}
\end{Theorem}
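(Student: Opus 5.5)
Both directions reduce to linear algebra in $\mathfrak A$, with $\Phi_\xi$ and $\Phi^*_a$ as the main tools.

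\emph{Necessity.} Suppose $\mathfrak A=\mathsf R(\mathfrak a)$ under an identification $\mathfrak a\simeq V$. For (A1) take $\xi=e$, the unity. Then $\Phi_e(\mathsf R_\eta)=\eta$, so $\Phi_e$ is a bijection. For (A2), let $b$ be the Frobenius form. I will assume $b$ nondegenerate, which is implicit in "Frobenius" (and the paper later speaks of a nondegenerate form). Put $a=b(e,\cdot)\in V^*$. Then
$$
(\mathsf R_\eta^*a)(\zeta)=b(e,\zeta\star\eta)=b(\eta,\zeta),
$$
so $\Phi^*_a(\mathsf R_\eta)=b(\eta,\cdot)$. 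This map is an isomorphism exactly because $b$ is nondegenerate.

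\emph{Sufficiency.} The plan is to transport the algebra structure of $\mathfrak A$ to $V$ via $\Phi_\xi$.

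First, $\mathfrak A$ is closed under composition. Given $M,N\in\mathfrak A$, the vector $MN\xi$ equals $L\xi$ for a unique $L\in\mathfrak A$, by (A1). For any $P\in\mathfrak A$ we then have
$$
(MN)P\xi=P(MN\xi)=PL\xi=LP\xi,
$$
using commutativity of all operators involved. Since $\mathfrak A\xi=V$, this gives $MN=L\in\mathfrak A$.

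Second, $\mathfrak A$ contains the identity. Take $E\in\mathfrak A$ with $E\xi=\xi$. The same argument gives $EP\xi=PE\xi=P\xi$ for all $P\in\mathfrak A$, so $E=\Id$.

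Hence $\mathfrak A$ is a unital commutative associative algebra of dimension $n$. Define $\star$ on $V$ by $M\xi\star N\xi=MN\xi$, with unity $\xi$. Then
$$
\mathsf R_{N\xi}(M\xi)=MN\xi=N(M\xi),
$$
so $\mathsf R_{N\xi}=N$ and the image of the regular representation is exactly $\mathfrak A$.

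It remains to produce the Frobenius form, and this is the step I expect to need care: it is the only place (A2) enters. Define
$$
b(M\xi,N\xi)=a(MN\xi).
$$
It is symmetric since $MN=NM$, and it is invariant: $b(P\star u,v)=a(PMN\xi)=b(u,P\star v)$. For nondegeneracy, suppose $b(M\xi,\cdot)=0$. Then $a(NM\xi)=(N^*a)(M\xi)=0$ for all $N\in\mathfrak A$. By (A2) the covectors $N^*a$ span $V^*$, so $M\xi=0$. This proves $b$ nondegenerate, and the theorem follows.
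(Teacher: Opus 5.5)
Your proof is correct and follows the paper's argument essentially step by step. Your closure-under-composition and identity steps are exactly the centraliser computation of Lemma~\ref{lem:2.1} applied to $B=MN$ and $B=\Id$; the transported product and the form $b(\eta,\zeta)=\langle a;\eta\star\zeta\rangle$ are the paper's; and your explicit choice $a=b(e,\cdot)$ makes concrete the paper's remark that $b$ turns (A1) into (A2), with your insistence on nondegeneracy of $b$ correctly supplying what the paper's definition of a Frobenius algebra leaves implicit.
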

 
 \begin{proof}
 Let us show that the first condition (A1) is equivalent to the fact that $\mathfrak A=\Span(K_1,\dots,K_n)$ is the image of some unital commutative associative algebra $\mathfrak a$ under the regular representation.
 
 First assume that the latter property holds. Then the unity $e\in \mathfrak a \simeq V$ satisfies condition (A1).   Indeed,  if $M = \mathsf R_\xi \in \operatorname{Ker} \Phi_e$,  $\xi\in \mathfrak a$, then $0 = \Phi_e (M)  = M e = \xi \star e = \xi$, that is, $M=\mathsf R_\xi=0$.   Hence the kernel of $\Phi_e$ is trivial, and $\Phi_e$ is an isomorphism as stated. 
 
 Conversely, assume that $\Phi_\xi : \mathfrak A \to V$ is an isomorphism.  
 
 \begin{Lemma}
 \label{lem:2.1}
 Let $K_1, \dots, K_n$ commute and satisfy {\rm (A1)}. Then $\mathfrak A=\Span(K_1, \dots, K_n)$  coincides with its own centraliser   $\mathcal C(\mathfrak A) = \{ B \in \gl(V)~|~ B{\cdot}A = A{\cdot}B \ \mbox{for all $A\in \mathfrak A$}\}$\footnote{Throughout the paper we use $A\cdot B$ for the usual product of matrices and linear operators.} and, therefore, is automatically a unital commutative associative algebra.  
 \end{Lemma}
 \begin{proof}
Take $B\in \mathcal C( \mathfrak A)$.  Since $\Phi_\xi$ is an isomorphism, there exists $A\in \mathfrak A$ such that  $\Phi_\xi A = B\xi$, i.e., $A\xi = B\xi$.  Also, for any other vector $\eta \in V$ there exists $\widetilde   A \in  \mathfrak A$ such that $\eta = \Phi_\xi \widetilde   A = \widetilde   A \xi$. Then
$$
B \eta = B (\widetilde   A \xi) = \widetilde   A (B \xi) = \widetilde   A ( A\xi) = A(\widetilde   A \xi) = A \eta.
$$
In other words,  $B \eta  =  A \eta$ for any $\eta \in V$, that is, $B = A \in \mathfrak A$.  
Thus,  $\mathfrak A = \mathcal C( \mathfrak A)$ and $\mathfrak A \subset \gl(V)$ is a unital commutative associative (sub)algebra.
\end{proof}

The isomorphism $\Phi_\xi: \mathfrak A \to V$ allows us to identify the algebra $\mathfrak A$ with $V$.   This identification allows us to endow  $V$ with the structure of a unital associative commutative algebra with the naturally induced operation $\star$ on $V$ 
$$
\eta \star \zeta = \Phi_\xi (A{\cdot}B), \quad \mbox{where $A=\Phi_\xi^{-1} \eta$, $B=\Phi_\xi^{-1} \zeta$}.
$$
Notice that $\Phi_\xi (\Id) = \Id \, \xi = \xi$, i.e., under this identification,  $\xi \in V$ becomes the unity element. 
 
It remains to notice that the inverse map  $\Phi_\xi^{-1}$
$$
\eta \in V  \quad  \mapsto \quad  \Phi_\xi^{-1} \eta \in \gl (V)
$$      
coincides with the regular representation of $(V, \star)$, that is,   $\Phi_\xi^{-1} (\eta) = \mathsf R_\eta$. In other words, $\mathfrak A\subset \gl(V)$ coincides with the image of $(V, \star)$ under the regular representation.  This statement is almost tautological. Indeed,   let  $\Phi_\xi^{-1} (\eta) = A$ and $\Phi_\xi^{-1} (\zeta) = B$, or equivalently, $\eta = A\xi$, $\zeta = B\xi$.  Then
$$
\mathsf R_\eta \zeta = \eta \star \zeta = \Phi_\xi (A{\cdot}B) = A(B\xi) = A \zeta = \Phi_\xi^{-1} (\eta) \zeta , \quad \mbox{that is $\Phi_\xi^{-1} (\eta)=\mathsf R_\eta$},
$$ 
as stated.   

Let us now show that the second property (A2) is equivalent to the existence of a Frobenius form, assuming that (A1) holds so that we may use the identification $\mathfrak A\simeq V$.

If $\mathfrak A$ is Frobenius,  then we can use the Frobenius form $b(\cdot,\cdot)$ to identify $\mathfrak A\simeq V$ with its dual  $\mathfrak A^*\simeq V^*$ and the existence of $a \in V^*$ satisfying (A2) immediately follows since conditions (A1) and (A2) becomes identical.

Conversely, assume that there exists $a \in V^*$ such that the map $M \in \mathfrak A  \mapsto M^*a \in  {V}^*$ is an isomorphism of vector spaces.  Consider the following bilinear form on $(V, \star)$:
$$
b( \eta, \zeta) = \langle a \, ;  \eta\star\zeta  \rangle,  
$$
where $\langle~ \, ;~\rangle$ denotes paring between vectors and covectors.
This form is obviously invariant, and we only need to check that $b(\cdot,\cdot)$ is nondegenerate.  Take an arbitrary $0\ne\eta\in 
V$ and let $M\in \mathfrak A$ be such that $M\xi = \eta$, or equivalently, $M = \Phi^{-1}_\xi (\eta)$.  Then
$$
b(\eta, \zeta) = \langle a \, ;  \eta\star\zeta  \rangle = \langle a\, ; M\zeta \rangle = \langle M^*a\, ; \zeta\rangle.
$$
Since $M \mapsto M^*a$ is an isomorphism, then $M^*a \ne 0$ and therefore there exists $\zeta$ such that $b(\eta, \zeta)\ne 0$.  This shows that $b(\cdot, \cdot)$ is a nondegenerate invariant form, i.e., $(V, \star)$ is Frobenius.  In other words, $\mathfrak A$ is the image of the regular representation of the Frobenius algebra $(V,\star)$, as required.   Theorem is proved.
\end{proof}

\begin{Remark}\label{p1}
{\rm
Let $\mathfrak A \subset \gl(V)$, $\dim V=n$,  be an $n$-dimensional Frobenius algebra satisfying conditions (A1) and (A2) and $\xi_0\in V$ be a certain vector from condition (A1).  As explained, we can use 
the map $\Phi_{\xi_0} : \mathfrak A \to V$ to identify $V$ with $\mathfrak A$.   Under this identification, $\xi_0\in V\simeq \mathfrak A$ becomes the unity element.  However, this vector can be replaced by any other vector $\xi$ such that $K_1\xi, \dots , K_n\xi$ are linearly independent, where $K_1,\dots,K_n$ is a basis of $\mathfrak A$.  The set $V_\circ \subset V$ of such generic vectors $\xi\in V$ coincides with the set of invertible elements of $\mathfrak A \simeq V$. Notice also that the group of invertible operators $\mathfrak A_\circ \subset \mathfrak A$ acts on $V_\circ$ freely and transitively.  Similarly,  if we denote by $(V^*)_\circ\subset V^*$ the set of generic covectors  $a\in V^*$ such that $K^*_1 a,\dots, K^*_n a$ are linearly independent, then the group $\mathfrak A_\circ$ acts on $(V^*)_\circ$ freely and transitively.  Of course,  $V_\circ$ and $(V^*)_\circ$ are isomorphic and are mapped to each other under identification of $V\simeq \mathfrak A$ and $V^*\simeq\mathfrak A^*$ by means of any Frobenius form $b$.
}\end{Remark}
 
In what follows, we work with Frobenius subalgebras $\mathfrak A \subset \gl(V)$ satisfying (A1) and (A2) with $V=T_x\mathsf M$ being the tangent space of a smooth manifold. One of the main concepts used below is Frobenius duality/dual bases.

Recall that every Frobenius form $b : \mathfrak A\times \mathfrak A \to \mathbb K$ is defined by means of a generic covector $a\in \mathfrak (\mathfrak A^*)_\circ \subset \mathfrak A^*$:
$$
b(K, M) = b_a(K, M) = \langle a \, ; K{\cdot}M \rangle, \quad K,M\in \mathfrak A \subset \gl (V).
$$
We will say that two bases $K_1,\dots, K_n$ and $M^1,\dots, M^n$ of $\mathfrak A$ are  {\it dual}  with respect to  
$b=b_a$, if 
$$
b_a( K_i, M^j ) =  \delta^j_i=\begin{cases} 1,  &\mbox{if} \ i=j \\ 0,  &\mbox{if}\  i\ne j  \end{cases} \quad \mbox{or, eqiuivalently,} \quad \langle a\, ; K_i{\cdot}M^j\rangle = \delta^i_j.
$$

Finally, let us discuss with some elementary tensor-type formulas related to Frobenius duality.  Let $a_{ij}^k$ be the structure constants of 
$\mathfrak A$ in the basis $K_1,\dots, K_n$, that is,
$$
K_i{\cdot} K_j = a_{ij}^m K_m. 
$$ 
Then the components of the Frobenius form $b=b_a$ are given by
$$
b_{ij} = a_{ij}^m a_m,
$$ 
where $a_m = a(K_m)$ are the components of the covector $a$ w.r.t. the basis $K_1,\dots, K_n$.
The vectors of the dual bases are related as follows
\begin{equation}
\label{eq:n18}
K_i = b_{ij} M^j \quad\mbox{or, equivalently,}\quad M^j= b^{ij}K_j,
\end{equation}
where $b^{ij}$ are the components of $b$ in the basis $M^1,\dots, M^n$ or, which is the same, the elements of the inverse matrix to $b_{ki}$ so that $b_{ki}b^{ij}= \delta^j_k$.

The structure constants $a^{ij}_k$ of $\mathfrak A$ in the basis $M^1,\dots, M^n$ (defined from the standard relations
$M^i{\cdot} M^j = a^{ij}_k M^k$)  can also be obtained from $a_{ij}^k$ by raising-lowering indices: 
$$
a^{ij}_k = b^{i\alpha}b^{j\beta} b_{k\gamma} a_{\alpha\beta}^\gamma.
$$
This formula can be simplified, if we use use the invariance of $b$, which can be written as  $b_{k\gamma} a_{\alpha\beta}^\gamma = b_{\alpha\gamma} a_{k \beta}^\gamma$. Namely,
\begin{equation}
\label{eq:n17}
a^{ij}_k = b^{i\alpha}b^{j\beta} b_{k\gamma} a_{\alpha\beta}^\gamma =
b^{i\alpha}b^{j\beta} b_{\alpha\gamma} a_{k \beta}^\gamma = \delta^i_\gamma b^{j\beta} a_{k\beta}^\gamma =
b^{j\beta} a_{k\beta}^i
\end{equation}

Finally we notice that the components $a^k = \langle a\, ; M^k\rangle=b^{kj} a_j$ of the covector $a$ in the dual basis coincide with the coordinates of the identity operator $\Id$ in the basis $K_1, \dots, K_n$, that is,
$$
\Id = a^1 K_1 + \dots + a^n K_n \quad\mbox{and, by duality,}\quad\Id = a_1M^1 + \dots + a_n M^n.
$$
Indeed, if $\Id = \sum \beta^i K_i$, then
\begin{equation}
\label{eq:n1}
a^j =\langle a\, ; M^j\rangle = \langle a\, ; M^j{\cdot}\Id\rangle = b (M^j , \mbox{$\sum \beta^i K_i$}) = \beta^i g(M^j, K_i) = \delta^j_i \beta^i = \beta^j.
\end{equation}

We finish this section with two examples of Frobenuis subalgebras $\mathfrak A\subset \gl(V)$ satisfying properties (A1) and (A2). 

\begin{Ex}\label{3:ex1}{\rm Consider a $\gl$-regular linear operator $L: V \to V$, $\dim V = n$.  The $\gl$-regularity condition can be introduced in several equivalent ways (see e.g. \cite{nij3}).  For instance, we may require that $n$ operators $\Id, L, \dots, L^{n-1}$ satisfy condition (A1), that is, there exists a (cyclic) vector $\xi\in V$ such that $\xi, L\xi, \dots, L^{n-1}\xi$ form a basis of $V$.   Then the centraliser of $L$
$$
\mathcal C(L) =\{ M\in \gl (V)~|~ M{\cdot}L=L{\cdot}M\,\} = \Span(\Id, L, L^2,\dots, L^{n-1}) 
$$ 
is a Frobenuis subalgebra in $\gl(V)$ satisfying  (A1) and (A2). Typical examples of such subalgebras are the centralisers of 
a diagonal matrix with simple spectrum and of a Jordan block, which have respectively the following form (in dimension 4):
$$
\mathfrak A_{\mathsf{diag}} = 
\left\{\begin{pmatrix} u_1 & 0 &0 &0 \\ 0 & u_2 & 0 & 0 \\
0 & 0 & u_3 & 0 \\
0 & 0 & 0 & u_4
\end{pmatrix}, \ u_i\in\mathbb K\right\}
,\qquad
\mathfrak A_{\mathsf{Jordan}}=
\left\{\begin{pmatrix} u_1 & 0 & 0 & 0 \\ u_2& u_1 & 0 & 0 \\
u_3& u_2 & u_1 & 0 \\
u_4 & u_3 & u_2 & u_1
\end{pmatrix}, \ u_i\in\mathbb K\right\}.
$$
}\end{Ex}

In dimension $n\le 3$, every Frobenius algebra is isomorphic to the centraliser of a certain $\gl$-regular operator $L\in\gl(3,\mathbb K)$.  Starting from dimension 4, there exist essentially different examples.

\begin{Ex} \label{ex:3.2}
\rm{
 The 4-dimensional subalgebra of $\gl(4,\mathbb K)$
 \begin{equation}
 \label{eq:n16}
\mathfrak A = \left\{
\begin{pmatrix} u_1 & 0 & 0 & 0 \\ u_2& u_1 & 0 & 0 \\
u_3& 0 & u_1 & 0 \\
u_4 & u_2 & u_3 & u_1
\end{pmatrix}, \ u_i\in\mathbb K\right\}
\end{equation}
is Frobenius and satisfies conditions (A1) and (A2).  This algebra, however, contains no $\gl$-regular operators. 
}
\end{Ex}


\section{ Operator Frobenius algebras and their duals}\label{s2}

In this section, we consider $n$ commuting operator fields $K_1,\dots, K_n$ on a smooth manifold $\mathsf M$, $\dim \mathsf M=n$, such that at each point $x\in \mathsf M$, they span an $n$-dimensional Frobenius subalgebra $\mathfrak A(x)\subset \gl(T_x\mathsf M)$ satisfying properties (A1) and (A2) discussed in the previous section.  In other words, $\mathfrak A(x)$ is the image of the regular representation of a certain Frobenius algebra.  We will refer to $\mathcal K = \operatorname{Span}_{\R}(K_1,\dots, K_n)$ as {\it operator Frobenius algebra}.

As before, we introduce the structure tensor $a_{ij}^s$ in the standard way
$$
K_i{\cdot}K_j = \sum_s a_{ij}^s K_s.
$$ 
Notice that $a_{ij}^s=a_{ij}^s(x)$ are smooth functions on $\mathsf M$. Thus, if we do not fix a point $x\in\mathsf M$, then $\operatorname{Span}_{\R}(K_1,\dots, K_n)$ is not a Frobenius algebra anymore unless $a_{ij}^s$ are constant.

 Keeping the notation from the previous section, we now literally repeat the construction of the dual basis for operator fields.  Let us choose constants $a_1,\dots, a_n\in \R$ such that the symmetric bilinear form $b_{ij}(x) = a_{ij}^k(x) a_k$ is non-degenerate.  These constants define an element $a$ from the dual space   $\mathcal K^*=\Span^*_{\R}(K_1,\dots, K_n)$ such that $\langle a\, ; K_i\rangle = a_i$.  This form $b=b_a$ serves as a Frobenius form at each tangent space $T_x\mathsf M$ so that all the algebraic relations discussed in Section \ref{s1} hold.  Let us emphasise that the objects $a_{ij}^k$, $b_{ij}$ and $a_k$ should not be treated as tensor fields on $\mathsf M$  (the underlying vector space for them is  $\mathfrak A(x)$, but not the tangent space $T_x\mathsf M$).  On the other hand,  the bilinear form $b$ and linear function $a$ can be applied to arbitrary linear combinations of operators $K_i$ with functional coefficients.  
 
We now use $b=b_a$ to construct dual operator fields $M^1,\dots, M^n$, $M^i(x)\in \mathfrak A(x)$:
$$
b_a ( M^i, K_j) \overset{\mathrm{def}}{=}\langle a \, ; M^i\cdot K_j\rangle=\delta^i_j, \quad\mbox{or, explicitly,} \quad M^i = b^{ij} K_j. 
$$   
Notice that $\mathcal M = \operatorname{Span}_{\R}(M^1,\dots, M^n)$ can be naturally identified with $\mathcal K^*$ since for any $M\in\mathcal M$ and $K\in\mathcal K$ we have $b_a(M,K)\in \R$.  This identification depends, however, on the choice of the  covector $a\in \mathcal K^*$.  To emphasise this point, we will write $\mathcal M = \mathcal K^*_a$.

In the third statement of Theorem \ref{t2} below, we will additionally assume that 
\begin{equation}
\label{eq:Id_assumption}
\Id \in \mathcal K = \operatorname{Span}_{\R}(K_1,\dots, K_n). 
\end{equation}
In other words, there exist constants $\beta^1,\dots,\beta^n\in\R$ such that $\Id = \sum \beta^i K_i$.  This assumption ensures that the relation between  the operator Frobenius algebras $\mathcal K$ and $\mathcal M$  is symmetric. 
Indeed, we observe from \eqref{eq:n1} that $a^i  \overset{\mathrm{def}}{=} \langle a \, ; M^i\rangle=\beta^i=\mathrm{const}\in\R$.  Hence, the covector $a$ used for constructing the Frobenius form $b_a$ can be treated as an element from $\mathcal M^*=\Span^*_{\R} (M^1,\dots, M^n)$\footnote{Otherwise,  $a^i = \langle a; M^i\rangle$ is not necessarily constant, and the `inclusion' $a\in \Span^*_{\R} (M^1,\dots, M^n)$ makes no sense.}. 
This implies that repeating the above construction for $M^1,\dots, M^n$ we return to the original basis $K_1,\dots, K_n$, that is,  between these collections of operators there is a natural {\it geometric duality} which takes into account the fact that all of our geometric objects are not constants, but essentially depend on a point $x\in\mathsf M$. In particular, $\mathcal M^*_a =\mathcal K$. In this context, it is important to remember that $\mathcal K=\Span_{\R} (K_1,\dots, K_n) \ne \mathcal M=\Span_{\R} (M^1,\dots,M^n)$ despite the fact that $\Span_{\R} \bigl(K_1(x),\dots, K_n(x)\bigr) = \Span_{\R} \bigl(M^1(x),\dots,M^n(x)\bigr)=\mathfrak A(x)$ for each $x\in\mathsf M$. 

So far, the construction was purely algebraic in the sense that we did not impose any geometric conditions on the operator fields involved. Let us now introduce a differential-geometric condition on the operator fields.

Let $L, M$ be a pair of operator fields. For a pair of vector fields $\xi, \eta$, define
\begin{equation}\label{ii:eq2}
\langle L, M \rangle (\xi, \eta) = LM[\xi, \eta] + [L\xi, M\eta] - L[\xi, M\eta] - M[L\xi, \eta].
\end{equation}
Here, square brackets stand for the standard commutator of vector fields.

The r.h.s. of  \eqref{ii:eq2}  defines a tensor field of type $(1, 2)$ if and only if the operators commute, that is, if  $LM - ML = 0$  by 
\cite[formula 3.9]{nijenhuis}. We say that  commuting $L$ and $ M$ are \emph{symmetries} of each other if the symmetric (in lower indices) part of $\langle L, M \rangle$ vanishes. If the entire tensor vanishes, then $L, M$ are called \emph{strong symmetries}. By  \cite{st},   if  $R, M$ are strong symmetries  of $L$ then  $RM$ is a strong symmetry of $L$. Note that $\Id$ is a strong symmetry for any operator field $L$.

A closed 1-form $\alpha$ is said to be a \emph{conservation law} of $L$ if $L^* \alpha$ is also closed. Our statements are local, so we assume that $\alpha$ is exact; that is, $\alpha = \ddd f$. The function $f$ is referred to as the density of the conservation law. Above $L^*$ denotes the operator dual to $L$, that is,  acting on $T^*\mathsf M$   by $L^* \alpha(\xi)= \alpha(L\xi)$.

The next theorem describes the relationship between dual operator Frobenius algebras $\mathcal K$ and $\mathcal M$ under the assumption that $K_1,\dots, K_n\in\mathcal K$ are symmetries of each other.

\begin{Theorem}\label{t2} Let $\mathcal K$ and $\mathcal M = \mathcal K_a^*$ be dual operator Frobenius algebras in the above sense w.r.t. the form $b=b_a$ and $K_1,\dots, K_n\in \mathcal K$  and $M^1,\dots, M^n\in\mathcal M$ be the corresponding dual bases.   Assume that $K_1,\dots,K_n$ are symmetries of each other, i.e., $\langle K_i, K_j\rangle (\xi,\xi)=0$, $i,j=1,\dots,n$.  Then    
\begin{itemize}

\item[$(1)$]  $M^1,\dots, M^n$ are symmetries of each other.

\item[$(2)$]  $K=h^1(x)K_1+\dots + h^n(x)K_n$ is a common symmetry of $K_1,\dots,K_n$ if and only if  $\ddd h$, where $h=a_s h^s$, is a common conservation law of $M^1,\dots, M^n$.  Moreover,  in this case, for any $M^i$ we have ${M^i}^*\ddd h  = \ddd h^i$.  

\item[$(3)$]  Under additional assumption \eqref{eq:Id_assumption}, an exact 1-form $\ddd f$ is a common conservation law for $K_1,\dots, K_n$ if and only if the operator $M=f_1(x) M^1+\dots+f_n(x)M^n$ defined from the relation $K_i^*\ddd f = \ddd f_i$
is a common symmetry of $M^1,\dots, M^n$.

\end{itemize}

\end{Theorem}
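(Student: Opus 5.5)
The plan rests on one elementary identity. For commuting operator fields $L,M$ and a function $f$, expanding \eqref{ii:eq2} directly gives
$$
\langle L, fM\rangle(\xi,\eta) = f\langle L,M\rangle(\xi,\eta) + \ddd f(L\xi)\,M\eta - \ddd f(\xi)\, LM\eta .
$$
A symmetric version holds for $\langle fL, M\rangle$. I would first prove the linear-algebra part, statement (2), and then feed it into (1) and (3).

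\textbf{Statement (2).} Take $K=\sum_s h^sK_s$. Since the $K_j$ are mutual symmetries, the identity above gives
$$
\langle K_j,K\rangle(\xi,\xi)=\sum_s\bigl(\ddd h^s(K_j\xi)\,K_s\xi-\ddd h^s(\xi)\,K_jK_s\xi\bigr)=\sum_m \bigl(K_j^*\ddd h^m - a^m_{js}\ddd h^s\bigr)(\xi)\,K_m\xi .
$$
So $K$ is a common symmetry if and only if the vector-valued quadratic form $\sum_m\omega_m(\xi)K_m\xi$ vanishes, where $\omega_m = K_j^*\ddd h^m-a^m_{js}\ddd h^s$.

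I expect the main technical point to be showing that this forces $\omega_m=0$. One cannot simply read off coefficients, because $K_m\xi$ depends on $\xi$. I would polarise to get $\sum_m\bigl(\omega_m(\xi)K_m\eta+\omega_m(\eta)K_m\xi\bigr)=0$. Then fix a generic $\xi$ as in (A1), identify $T_x\mathsf M\simeq\mathfrak A(x)$ with $\xi$ as unity, and let $\eta$ run over $A\xi$, $A\in\mathfrak A(x)$. Using commutativity, this should turn into an identity in the Frobenius algebra; pairing with $a$ and invoking (A2) (nondegeneracy of $b_a$) should kill each $\omega_m$.

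Once the criterion $K_j^*\ddd h^m=a^m_{js}\ddd h^s$ is established, contracting with $a_m$ and using $M^i=b^{ij}K_j$ and $b_{js}=a_m a^m_{js}$ gives
$$
{M^i}^*\ddd h=b^{ij}a_m a^m_{js}\ddd h^s=\ddd h^i .
$$
Hence $\ddd h$ is a common conservation law, and the claimed formula ${M^i}^*\ddd h=\ddd h^i$ holds.

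For the converse, suppose ${M^i}^*\ddd h$ is closed for all $i$. Define $h^i$ by ${M^i}^*\ddd h=\ddd h^i$. Since $\Id=\sum a_sM^s$, we get $\ddd h=a_s\ddd h^s$, so $h=a_sh^s$ up to a constant. Then $K_j^*\ddd h^m=(M^mK_j)^*\ddd h$. Expanding $M^mK_j$ in the basis $M^t$ via \eqref{eq:n17} recovers the criterion pointwise, so this direction is purely algebraic.

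\textbf{Statement (1).} Write $M^i=b^{ij}K_j$ and expand $\langle M^i,M^j\rangle(\xi,\xi)$ bilinearly. The terms $b^{ik}b^{jl}\langle K_k,K_l\rangle$ vanish by hypothesis. What remains are the first-order terms in $\ddd b^{ij}=-b^{ik}(a_m\ddd a^m_{kl})b^{lj}$.

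I would derive the needed differential identities for $a^m_{kl}$ from the mutual-symmetry hypothesis. The natural route is to test the criterion of step (2) on $K_kK_l=a^m_{kl}K_m$, checking that products stay symmetries modulo the structure (this is the step I am least sure of). Then I would substitute and reduce, again at a generic $\xi$ via (A1)/(A2), to an algebraic identity that follows from \eqref{eq:n17}. If a direct check is messy, an alternative is to show that the ${M^i}$ are common symmetries of $\mathcal K$ through the criterion of (2) and then exploit commutativity.

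\textbf{Statement (3).} Under \eqref{eq:Id_assumption}, $a\in\mathcal M^*$ and $\mathcal M^*_a=\mathcal K$. By (1), the $M^i$ are mutual symmetries. Applying (2) with the roles of $\mathcal K$ and $\mathcal M$ swapped gives exactly (3), with $f=a^s f_s$ and $K_i^*\ddd f=\ddd f_i$.
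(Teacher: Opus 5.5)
Your treatment of (2) and (3) matches the paper's. You derive the criterion $K_i^*\ddd h^m=a^m_{is}\ddd h^s$ by expanding $\langle K_i,K\rangle$, contract with $a_m$, prove the converse through $M^mK_j=b_{jl}a^{ml}_tM^t$ and \eqref{eq:n17}, and get (3) by interchanging $\mathcal K$ and $\mathcal M$. Polarisation is not needed: for a fixed generic $\xi$ the vectors $K_m\xi$ are linearly independent, so $\omega_m(\xi)=0$ on an open dense set, hence $\omega_m=0$.

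The gap is statement (1), which is the real content of the theorem and on which (3) also depends. Your plan needs the criterion of (2) to apply to $K_pK_q=a^m_{pq}K_m$, i.e.\ products of mutual symmetries must themselves be symmetries. For symmetries in the weak sense this is false. From \eqref{ii:eq2} and commutativity,
\[
\langle L,MN\rangle(\xi,\eta)=\langle L,M\rangle(\xi,N\eta)+M\,\langle L,N\rangle(\xi,\eta),
\]
so $\langle K_i,K_pK_q\rangle(\xi,\xi)=\langle K_i,K_p\rangle(\xi,K_q\xi)$. This does not vanish, since only the symmetric part of $\langle K_i,K_p\rangle$ is zero. A diagonal example makes this concrete. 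For diagonal $K_i$ with eigenvalues $\lambda^k_i$, mutual symmetry means $\partial_j\lambda^k_i=\Gamma^k_{kj}(\lambda^j_i-\lambda^k_i)$ with $\Gamma^k_{kj}$ independent of $i$. The eigenvalues $\lambda^k_p\lambda^k_q$ of $K_pK_q$ violate this by $\Gamma^k_{kj}(\lambda^j_p-\lambda^k_p)(\lambda^j_q-\lambda^k_q)$. What the hypothesis actually gives is
\[
\bigl(K_i^*\ddd a^m_{pq}-a^m_{is}\ddd a^s_{pq}\bigr)(\xi)\,K_m\xi=\langle K_i,K_p\rangle(\xi,K_q\xi).
\]
A direct proof would therefore have to show that these unknown skew parts cancel in $\langle M^i,M^j\rangle(\xi,\xi)$, and your proposal does not do this. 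Your fallback also fails. By your own (2), if $M^i=b^{ik}K_k$ were a common symmetry of the $K_j$, then ${M^j}^*\ddd a^i=\ddd b^{ij}$ with $a^i=\langle a\,;M^i\rangle$. Under \eqref{eq:Id_assumption} $a^i$ is constant, so $b^{ij}$ would be constant and $\mathcal M=\mathcal K$, which is not true in general.

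The missing idea is to obtain (1) from conservation laws rather than from identities for $\ddd a^m_{pq}$.
\begin{itemize}
\item If commuting $L,M$ have $n$ independent common conservation laws $\ddd y^1,\dots,\ddd y^n$, work in the coordinates $y^i$. Closedness of $L^*\ddd y^i$ and $M^*\ddd y^i$ reduces the components of $\langle L,M\rangle$ to an expression of the form $\partial_k(ML)^i_j-\partial_j(ML)^i_k$. This is skew in $j,k$, so $L$ and $M$ are mutual symmetries (Lemma \ref{dfg3}).
\item Such laws for the $M^i$ come from your statement (2). Mutual symmetry of the $K_j$ is exactly compatibility of the flows $u_{t_j}=K_ju_x$, so formally or analytically there are solutions for any generic initial curve.
\item Inverting $(t_1,\dots,t_n)\mapsto u$ at fixed $x$ gives $\frac{\partial t}{\partial u}K_j=A_j\frac{\partial t}{\partial u}$ with $(A_j)^s_m=a^s_{jm}$. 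This is precisely the criterion for $t_sK_s$ to be a common symmetry (Lemma \ref{dfg4}).
\item Statement (2) then makes $\ddd(a_st_s)$ a common conservation law of the $M^i$, and varying the initial data gives $n$ independent ones.
\end{itemize}
The smooth case follows because the symmetric part of $\langle M^i,M^j\rangle$ then has vanishing Taylor series at every point.
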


\begin{proof}
We start with the second statement. 

\begin{Lemma}\label{dfg1}
An operator field $K = h^1 K_1 + \dots + h^n K_n$ is a common symmetry of $K_i$'s if and only if the differentials $\ddd h^i$ satisfy the following linear system
\begin{equation}\label{epic1}
   K_i^* \ddd h^j = a^j_{is} \ddd h^s.  
\end{equation}
\end{Lemma}
\begin{proof} 

By direct computation, we have 
\begin{equation}\label{top4}
\begin{aligned}
\langle K_i, K \rangle (\xi, \eta) & = [K_i \xi, K_s h^s \eta] + K_i K_s h^s [\xi, \eta] - h^s K_s [K_i\xi, \eta] - K_i [\xi, h^s K_s \eta] = \\
& = h^s \langle K_i, K_s \rangle (\xi, \eta) + \ddd h^s (K_i \xi) \cdot K_s \eta - \ddd h^s (\xi) \cdot K_s K_i \eta = \\
& = h^s \langle K_i, K_s \rangle (\xi, \eta) + K_i^*\ddd h^j(\xi) \cdot K_j \eta - a_{is}^j  \ddd h^s(\xi)  \cdot  K_j  \eta =\\
& = h^s \langle K_i, K_s \rangle (\xi, \eta) + \left(K_i^* \ddd h^j(\xi) -  a_{is}^j\ddd h^s (\xi)\right) \cdot K_j  \eta.
\end{aligned}    
\end{equation}

Taking $\xi = \eta$ to be a generic vector (i.e., such that $K_1\xi, \dots, K_n\xi$ are linearly independent), we observe that $\langle K_i, K_s \rangle (\xi, \xi) = 0$ and, due to linear independence of $K_j\xi$, conclude that $\langle K_i,  K \rangle (\xi,\xi)=0$ if and only if
$$
K_i^* \ddd h^j (\xi) - a^j_{is} \ddd h^s(\xi) = 0.
$$
Since $\xi$ is generic,  $\langle K_i, K_s \rangle (\xi, \xi)$ vanishes identically for all $\xi$ if and only if $K_i^* \ddd h^j  - a^j_{is} \ddd h^s =0$, as required.
\end{proof}

It remains to show that \eqref{epic1} is equivalent to ${M^j}^* \ddd h = \ddd h^j$. First, assume that \eqref{epic1} holds.  Then taking into account that $h =\sum_j a_j h^j$, we have 
$$
K_i^* \ddd h = K_i^* \Big( \sum_{j = 1}^n a_j \ddd h^j \Big) = \sum_{j = 1}^n a_j K_i^* \ddd h^j = a_j a^j_{is} \ddd h^s = b_{is} \ddd h^s.
$$
Hence
$$
{M^j}^* \ddd h = b^{ij} K_i^* \ddd h = b^{ij} b_{is}\ddd h^s = \delta^j_s \ddd h^s = \ddd h^j,
$$
as required.

Conversely, assume that ${M^j}^* \ddd h = \ddd h^j$.  Then
$$
\ddd h^j = {M^j}^* \ddd h = b^{ij} K_i^* \ddd h \quad\mbox{or, equivalently,}\quad K_i^* \ddd h =  b_{im} \ddd h^m = a_k a^k_{im} \ddd h^m.
$$
Next, we have
$$
\begin{aligned}
K_i^* \ddd h^j &= K_i^* M^{j*} \ddd h = M^{j*} K_i^*\ddd h =  
 {M^j}^* b_{im}  \ddd h^m = \\  &=
b_{im} {M^j}^* {M^m}^* \ddd h =   b_{im} a^{jm}_s {M^s}^* \ddd h = b_{im} a^{jm}_s \ddd h^s = a_{is}^j \ddd h^s,
\end{aligned}
$$
as required.   

The third statement follows from the fact that $\mathcal K = \mathcal M^*_a$. It is sufficient to interchange $\mathcal M$ and $\mathcal K$ and apply the second statement. To prove the first statement, we need two lemmas.

\begin{Lemma}\label{dfg3}
Let two operator fields $L$ and $M$ admit $n$ linearly independent common conservation laws $\ddd f_1, \dots, \ddd f_n$ and $LM=ML$. Then
\begin{enumerate}
    \item $L$ and $M$ are symmetries of each other.
    \item If, in addition, $\ddd f_1, \dots, \ddd f_n$  conservation laws of $LM$, then $L$ and $M$ are strong symmetries of each other.
\end{enumerate}
\end{Lemma}
\begin{proof} Take $u^i = f_i$ as local coordinates. Since the 1-forms $L^*\ddd u^i$ and $M^* \ddd u^i$ are closed, in these coordinates  we obtain:
$$
\pd{L^i_j}{u^k} - \pd{L^i_k}{u^j} = 0, \quad \pd{M^i_j}{u^k} - \pd{M^i_k}{u^j} = 0.
$$
Applying these identities to the explicit formula of the tensor $\langle L, M \rangle$ in the coordinates $u^1,\dots, u^n$, we obtain:
\begin{equation}\label{hold1}
\begin{aligned}
    \pd{M^i_k}{u^q} L^q_j  + M^i_q \pd{L^q_j}{u^k} - \pd{L^i_j}{u^q} M^q_k & - L^i_q \pd{M^q_k}{u^j}   = \pd{M^i_q}{u^k} L^q_j  + M^i_q \pd{L^q_j}{u^k} - \pd{L^i_q}{u^j} M^q_k - L^i_q \pd{M^q_k}{u^j} = \\
    = & \pd{}{u^j} \Big( M^i_q L^q_k\Big) - \pd{}{u^k} \Big( L^i_q M^q_j\Big) = \pd{}{u^j} \Big( M^i_q L^q_k\Big) - \pd{}{u^k} \Big( M^i_q L^q_j\Big). 
\end{aligned}    
\end{equation}
The last equation is due to the commutativity of $L$ and $M$. The symmetric part of $\langle L, M \rangle$ takes the form
\begin{equation}\label{hold2}
\begin{aligned}
\pd{}{u^j} \Big( M^i_q L^q_k\Big) - \pd{}{u^k} \Big( M^i_q L^q_j\Big) + \pd{}{u^k} \Big( M^i_q L^q_j\Big) - \pd{}{u^j} \Big( M^i_q L^q_k\Big) = 0. 
\end{aligned}    
\end{equation}
Equation \eqref{hold2} implies the first statement. If, in addition, all $\ddd u^i$ are conservation laws for $ML$, then the r.h.s. of  \eqref{hold1} vanishes, i.e., $\langle L, M\rangle=0$, which proves the second statement.
\end{proof}

In the next lemma, we assume that we work in either analytic or formal category. Consider the system of quasilinear PDEs related to the operators $K_1,\dots, K_n$:
\begin{equation}\label{sys}
u_{t_j} = K_j u_{x}, \quad j = 1, \dots, n    
\end{equation}

\begin{Lemma}\label{dfg4}
For any  solution $u^i = u^i(x; t_1,\dots, t_n)$ of \eqref{sys} such that $\pd{u}{t}$ is invertible, the operator field
$$
K = t_1(u;x) K_1 + \dots + t_n(u;x) K_n
$$
is a (formal) common symmetry of $K_j$'s for any fixed value of $x$.  Here $t_i(u;x)= t_i(u^1,\dots, u^n, x)$ are obtained by inverting the map $(t_1,\dots,t_n) \mapsto (u^1,\dots,u^n)$ thinking of $x$ as a parameter.
\end{Lemma}

\begin{proof}
In the formal category, the condition that $K_j$ are symmetries of one another implies the existence of a formal solution for any initial formal curve $u_0(x)=u(x; 0, \dots, 0)$. Taking $\frac{\ddd u_0}{\ddd x} = \xi$ to be generic in the sense that $K_1\xi,\dots,K_n\xi$ are linearly independent, we guarantee that $\pd{u}{t}$ is invertible. In the analytic category, one applies the Cauchy-Kovalevskaya theorem to obtain the analytic solutions. 

The system \eqref{sys} implies that
$$
(K_j)^i_s \pd{u^s}{t_m} = (K_j)^i_s (K_m)^s_r \pd{u^r}{x} = a_{jm}^s (K_s)^i_r \pd{u^r}{x} = a_{jm}^s \pd{u^i}{t_s}.
$$
Denoting $(A_j)^s_m = a^s_{jm}$, we get
$$
K_j \pd{u}{t} = \pd{u}{t} A_j.
$$
or, equivalently, using $\pd{t}{u} = \left(\pd{u}{t}\right)^{-1}$:
$$
\pd{t}{u} K_j =  A_j \pd{t}{u}.
$$
Note that the last identity is exactly relation \eqref{epic1} from Lemma \ref{dfg1}  for $K=t_1 K_1 + \dots + t_nK_n$.  Hence, by Lemma \ref{dfg1},  $K$ is a common symmetry of $K_j$'s, $j=1,\dots,n$.
\end{proof}

In the formal and analytic categories, the second statement of Theorem \ref{t2} and  Lemma \ref{dfg4}  imply that the function $\sum a_i t_i$ is a common conservation law of $M^j$'s. Taking different initial conditions, one may construct $n$ conservation laws of this kind, such that their differentials are linearly independent. By Lemma \ref{dfg3}, this guarantees that $M^j$'s are symmetries of each other.

In the smooth category, we take an arbitrary point $x\in\mathsf M$ and pass to the formal category, which involves replacing all smooth objects with their Taylor expansions at this point. One will find, in this setup, that at the point $x$,   the operators  $M^i$ and $M^j$ are formal symmetries of each other, that is, the symmetric part of $\langle M^i, M^j \rangle$ vanishes at $x$ together with all of its derivatives.  Since $x\in \mathsf M$ is arbitrary,  the Taylor expansion of this tensor field is trivial at each point, which implies that the symmetric part of $\langle M^i, M^j \rangle$ vanishes identically. In other words,  $M^i$ and $M^j$ are symmetries of each other not only {\it formally}, but also in the usual (smooth) sense.  This completes the proof of Theorem \ref{t2}.
\end{proof}

 Notice that the proof of Theorem \ref{t2} implies the following statement. 

\begin{Corollary} In the real analytic case, 
 any operator Frobenius algebra containing the identity operator and generated by mutual symmetries admits $n$ independent conservation laws.
\end{Corollary}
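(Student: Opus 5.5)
The plan is to read the corollary off the proof of Theorem \ref{t2}, applied in reverse through the duality \eqref{eq:Id_assumption}. Let $\mathcal K=\Span_{\R}(K_1,\dots,K_n)$ be a real analytic operator Frobenius algebra containing $\Id$ and generated by mutual symmetries, and fix a constant covector $a\in\mathcal K^*$ for which $b_a$ is nondegenerate. Such an $a$ exists locally, by (A2) and Remark \ref{p1}, after a choice of generic constants near a point. Form the dual basis $M^i=b^{ij}K_j$. Because $\Id\in\mathcal K$, we have $\mathcal M^*_a=\mathcal K$. Statement (3) of Theorem \ref{t2} then says that common conservation laws $\ddd f$ of the $K_i$ correspond exactly to common symmetries $M=f_iM^i$ of the $M^j$. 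So it suffices to produce $n$ common symmetries of the $M^j$ whose associated functions $f=\sum_i \beta_i f_i$ have independent differentials. Here $\beta_i=\langle a; K_i\rangle$ in the dual picture.

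By statement (1), the $M^j$ are mutual symmetries, and they are analytic because $b^{ij}$ is. I would apply Lemma \ref{dfg4} to the system $u_{t_j}=M^j u_x$. The Cauchy--Kovalevskaya theorem gives an analytic solution for an analytic initial curve $u_0(x)$ with $u_0'(x)$ generic, meaning that $M^1u_0',\dots,M^nu_0'$ are independent. Then $\partial u/\partial t$ is invertible near $t=0$, so $M=t_1(u;x)M^1+\dots+t_n(u;x)M^n$ is a common symmetry of the $M^j$ for each fixed $x$. By statement (2) with the roles of $\mathcal K$ and $\mathcal M$ exchanged, which is legitimate since $\mathcal K=\mathcal M_a^*$, the function $h_x(u)=\sum_i \langle a;K_i\rangle\, t_i(u;x)$ is a common conservation law of the $K_j$. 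Moreover $K_j^*\ddd h_x=\ddd t_j$.

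The remaining step is to get $n$ \emph{independent} laws. Since $K_j^*\ddd h_x=\ddd t_j$ and the $\ddd t_j$ are independent (the map $t\mapsto u$ is a local diffeomorphism), the forms $\ddd h_x$ are nonzero. I would obtain independence by varying the parameter $x$, or equivalently the initial curve. Independence of $\ddd h_{x_1},\dots,\ddd h_{x_n}$ at a point is an open condition, so analyticity in the data reduces it to exhibiting a single configuration where it holds. For that I would use the derivatives $\partial_x^k h_x$ at $x=0$, which are also conservation laws because the conditions are linear with constant coefficients in $x$. One can also choose $u_0$ as a straight line with different generic directions $\xi$, so that to first order $\ddd h$ is given by $\sum a_i t_i$ linear in $u$ with $t=\Phi_\xi^{-1}$. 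Its differential at the point is then the covector $\eta\mapsto \langle a; \Phi_\xi^{-1}\eta\rangle$, which is $\xi^{-1}$-translated $a$ under the identification of Remark \ref{p1}. As $\xi$ runs over $V_\circ$, these covectors span $V^*$ because $\mathfrak A_\circ$ acts transitively on $(V^*)_\circ$.

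This last step is the main obstacle: checking that the differentials at the base point really are the first-order covectors computed above, and that they span. I expect this to follow from evaluating at $t=0$, where $\partial t/\partial u=\Phi_{\xi}^{-1}$ exactly.
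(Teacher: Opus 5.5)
Your proposal is correct and follows the paper's own route: the corollary is read off from the proof of Theorem \ref{t2}(1), using Lemma \ref{dfg4} on the dual system, statement (2) with the roles of $\mathcal K$ and $\mathcal M$ exchanged via $\Id\in\mathcal K$, and varying the initial data; your choice of directions $\xi\in V_\circ$ fills in the independence step the paper only asserts, and no first-order approximation is needed there, since $\partial u/\partial t|_{t=0}=\Phi_\xi$ exactly gives $\ddd h(p)=\langle a;\Phi_\xi^{-1}(\cdot)\rangle$. One slip: the coefficients in $h_x$ must be $\langle a;M^i\rangle$, which are the (constant, by \eqref{eq:Id_assumption}) coordinates of $\Id$ in the basis $K_i$, not $\langle a;K_i\rangle$; your final formula $\langle a;\Phi_\xi^{-1}\eta\rangle$ in fact already uses the correct ones.
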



\section{Nijenhuis operators and their symmetry algebras}\label{s3}

Here we are dealing with a special collection of commuting operator fields studied in Section \ref{s2}. For the purposes of the next section, it will be convenient to denote them by $M^1, \dots , M^n$  (in Section \ref{s2}, the initial basis was $K_1,\dots, K_n$, but due to duality there is no essential difference between $K_i$'s and $M^j$'s). 

Thus, we consider $n$ operator fields  $M^1, \dots, M^n$ on $\mathsf M$, $\dim \mathsf M=n$, which are symmetries of each other and satisfy properties (A1) and (A2) from Theorem \ref{t1}. In addition, we assume that all operators $M$ from the operator Frobenius algebra $\mathcal M= \Span_{\R}(M_1,\dots, M_n)$ are Nijenhuis, that is, the Nijenhuis torsion 
$\mathcal N_M \overset{\mathrm{def}}{=} \langle M, M\rangle$ vanishes. Equivalently, we may require that $\langle M_i, M_j\rangle =0$, $i,j=1,\dots,n$, i.e., $M_i$ are {\it strong} symmetries of each other.

We define the symmetry algebra $\Sym$ of $M^i$'s (or equivalently, of the operator Frobenius algebra $\mathcal M$) as the vector space of common {\it strong symmetries} of $M^1, \dots, M^n$:
\begin{equation}
\label{eq:n12}
\Sym=\Sym(M^1,\dots, M^n) = (\bigl\{ M  ~|~ M {\cdot}M_i = M_i {\cdot}M, \ \   \langle M,   M_i\rangle =0,  \ i=1,\dots,n\bigr\}.
\end{equation}

Recall   that set of strong symmetries of a given operator is closed under usual matrix multiplication (Lemma 2.1 in \cite{st}) so that $\Sym$ is a unital associative (infinite-dimensional) algebra. 
The next theorem describes basic properties of the algebra $\Sym$.

\begin{Theorem}\label{t3}
In the setup above, the following holds:
\begin{enumerate}
    \item  Every operator $M\in \Sym$ can be uniquely written as $M = g_1 M^1 + \dots + g_n M^n$ for some functions $g_i, i = 1, \dots, n$. In particular, the algebra $\Sym$ is commutative.  Moreover, at each point $x\in\mathsf M$,  the subspace $\Sym_x = \{ M(x)~|~ M\in\Sym\} \subset \gl(T_x\mathsf M)$ is a Frobenius subalgebra.
    \item Every common symmetry $M$ of the operators $M^i$'s is necessarily strong and therefore belongs to $\Sym$.
 All $M, N \in \Sym$ are strong symmetries of each other.
    \item For each common conservation law $\alpha$  of $M_i$'s and each operator $M \in \Sym$, the $1$-form $M^* \alpha$ is closed. In other words, common conservation laws of $M^i$'s are common conservation laws for all elements of $\Sym$;
    \item In the formal and analytic categories,  there exists a common conservation law $\alpha$ of $M_i$'s such that ${M^i}^* \alpha$, $i=1,\dots,n$, are linearly independent.
\end{enumerate}
\end{Theorem}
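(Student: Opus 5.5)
Statement 1 follows from the pointwise algebra of Section~\ref{s1}. At each point $x$, the values $M^1(x),\dots,M^n(x)$ span a Frobenius subalgebra $\mathfrak A(x)$ satisfying (A1). By Lemma~\ref{lem:2.1}, $\mathfrak A(x)$ coincides with its own centraliser. Every $M\in\Sym$ commutes with all $M^i$, so $M(x)\in\mathfrak A(x)$. Hence $M=g_iM^i$ with unique smooth coefficients $g_i$, because the $M^i(x)$ form a basis depending smoothly on $x$. Commutativity of $\Sym$ and the equality $\Sym_x=\mathfrak A(x)$ are then immediate; the reverse inclusion holds because the $M^i$ themselves lie in $\Sym$. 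In particular $\Sym_x$ is Frobenius.

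For statement 2, let $M=g_iM^i$ be a common symmetry. I would rerun the computation \eqref{top4} of Lemma~\ref{dfg1}, now with $\langle M^i,M^s\rangle=0$ identically rather than only its symmetric part. This gives
$$\langle M^i,M\rangle(\xi,\eta)=\bigl({M^i}^*\ddd g_j(\xi)-a^{is}_j\,\ddd g_s(\xi)\bigr)M^j\eta .$$
Lemma~\ref{dfg1} shows that the symmetry condition forces the bracket coefficient to vanish. The full tensor therefore vanishes, so $M$ is a strong symmetry. The same formula shows that strong symmetry is equivalent to the linear system ${M^i}^*\ddd g_j=a^{is}_j\,\ddd g_s$.

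To prove that $M,N\in\Sym$ are strong symmetries of each other, I would write $N=h_jM^j$ and expand $\langle M,N\rangle$ by the Leibniz-type rule used in \eqref{top4}, applied in the first argument as well. Since $\langle M^i,M^j\rangle=0$, the remaining terms are products of $\ddd g$, $\ddd h$ and the $M$'s. I expect them to cancel after substituting the linear systems for $\ddd g$ and $\ddd h$ and using the associativity of the structure constants $a^{ij}_k$. This bookkeeping is the main computational step. If it becomes messy, I would instead invoke the closure of strong symmetries under products from \cite{st}, together with the fact that sums and functional combinations obey the same linear system.

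For statement 3, fix a common conservation law $\alpha=\ddd f$ and $M=g_iM^i\in\Sym$. Then $M^*\alpha=g_i\,\ddd f^i$, where ${M^i}^*\ddd f=\ddd f^i$. Its exterior derivative is $\sum_i \ddd g_i\wedge\ddd f^i$. Writing $\ddd f^i={M^i}^*\ddd f$ and using the linear system ${M^i}^*\ddd g_j=a^{is}_j\ddd g_s$, I would show that this sum is zero. To do so I evaluate it on a pair of vectors and move the operators $M^i$ across the wedge by the symmetry of the structure constants. An alternative route is the standard identity that for a strong symmetry pair $(L,M)$ with $\alpha$, $L^*\alpha$ and $M^*\alpha$ closed, the form $\ddd(LM)^*\alpha$ is expressed through $\langle L,M\rangle$. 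Either way this yields closedness.

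For statement 4, I would apply Lemma~\ref{dfg4} to the system $u_{t_j}=M^ju_x$, using a generic initial curve. This produces a common symmetry $K=t_iM^i$, which lies in $\Sym$ by statement 2. Next, take an operator Frobenius algebra $\mathcal K$ dual to $\mathcal M$, relabelling so that Theorem~\ref{t2} applies with the roles exchanged. Theorem~\ref{t2}(2) then turns $K$ into a common conservation law $\ddd h$, $h=a_it_i$, of the dual basis, rather than of the $M^i$. So I would instead produce the conservation law directly: fix a point and solve the linear system ${M^i}^*\ddd f=\ddd f^i$, with integrability supplied by $\langle M^i,M^j\rangle=0$ and the Nijenhuis property, in the formal or analytic category by Cauchy--Kovalevskaya. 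The free initial data allow one to choose $\ddd f$ generic in the sense of (A2) at the point, so that the ${M^i}^*\alpha$ are independent nearby. Verifying this compatibility is the main obstacle I foresee.
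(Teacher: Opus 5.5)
Your arguments for statements 1--3 follow the paper's route. Statement 1 uses Lemma~\ref{lem:2.1} for the pointwise description. Statement 2 uses the linear system ${M^i}^*\ddd g_j=a^{is}_j\ddd g_s$ from Lemma~\ref{dfg1} together with a Leibniz expansion of $\langle M,N\rangle$; the cancellation there only needs $a^{sm}_r=a^{ms}_r$, not associativity. Statement 3 evaluates $\ddd g_s\wedge{M^s}^*\ddd f$ on $M^i\xi,M^j\xi$. Your proof that a common symmetry is strong, read off from \eqref{top4} with $\langle M^i,M^s\rangle\equiv 0$, is correct. It is slightly more direct than the paper, which gets it as the case $N=M^i$ of the computation $\langle M,N\rangle=0$.

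Statement 4, however, is not proved. The system ${M^i}^*\ddd f=\ddd f^i$ is overdetermined: it has $n^2$ scalar first-order equations for the $n+1$ functions $f,f^1,\dots,f^n$. So Cauchy--Kovalevskaya does not apply to it as it stands, and you would need a formal-integrability (Cartan--K\"ahler type) argument. Deriving its compatibility conditions from $\langle M^i,M^j\rangle=0$, and the freedom to make $\ddd f$ generic, is exactly the content you defer as ``the main obstacle''.

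Your first attempt had the right idea but applied it on the wrong side of the duality. The missing steps are as follows.
\begin{enumerate}
\item By statements 1--3 and Remark~\ref{rem1}, it suffices to treat one basis of $\Sym$. Conservation laws pass to all of $\Sym$, and independence of the ${M^i}^*\alpha$ depends only on the pointwise span $\Sym_x$. So you may assume $M^1=\Id$.
\item Then \eqref{eq:Id_assumption} holds for $\mathcal M$, and the duality $\mathcal M\leftrightarrow\mathcal K$ is involutive, with a constant covector on both sides. Theorem~\ref{t2}(1) with roles exchanged shows that the dual operators $K_i$ are mutual symmetries.
\item Now apply Lemma~\ref{dfg4} to $u_{t_j}=K_ju_x$, not to the $M^j$. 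It yields a common symmetry $t_1K_1+\dots+t_nK_n$ of the $K_i$ with $\ddd t_1,\dots,\ddd t_n$ independent, because $\partial u/\partial t$ is invertible.
\item Theorem~\ref{t2}(2), in its original orientation, then says that $\ddd h$ with $h=a_st_s$ is a common conservation law of the $M^i$, and ${M^i}^*\ddd h=\ddd t_i$ are independent.
\end{enumerate}
The compatibility you could not verify is thus inherited from the solvability of the commuting flows of the dual algebra.
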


\begin{Remark}\label{rem1}
\rm{
Theorem \ref{t3} implies that the operators $M^i$ in the definition of $\Sym$ can be replaced by any other operators $\widetilde M^i\in\Sym$ provided   
$\Span \bigl(M^1(x),\dots, M^n(x)\bigr)  = \Span \bigl(\widetilde M^1(x),\dots, \widetilde M^n(x)\bigr)$ at each point $x\in\mathsf M$. Hence, a particular choice of $M^i$'s can be ignored. For our purposes,  the algebra $\Sym$ itself is more important, and that is why we use the notation  $\Sym$ instead of $\Sym(M_1,\dots,M_n)$.
}    
\end{Remark}

\begin{proof}
The commuting operators $M^i$  satisfy conditions (A1) and (A2) from Theorem \ref{t1}. Since by definition,  $M$ commutes with $M^i$, $i=1,\dots, n$, then Lemma \ref{lem:2.1} implies that $M$ is a linear combination of $M^i$ whose coefficients may depend on $x\in \mathsf M$.  In other words,  $\Sym_x = \Span \bigl(M^1(x),\dots, M_n(x)\bigr)$.   Hence, the first statement follows from Theorem \ref{t1}. 

Now, let us proceed with the second one. As before, we define $a^{ij}_s$ as
$$
M^i {\cdot} M^j = a^{ij}_s M^s.
$$
Assume that $M$ and $N$ are common symmetries of $M^i$, $i=1,\dots, n$ (not necessarily strong).  Again by Lemma \ref{lem:2.1}, they have the form $M = g_1 M^1 + \dots + g_n M^n$ and $N = h_1 M^1 + \dots + h_n M^n$.  Let us show that they are string symmetries of each other. This will prove the both parts of the second statement.  According to Lemma \ref{dfg1} and formula \eqref{epic1} we obtain
\begin{equation}\label{compute0}
{M^i}^* \ddd g_j = a^{is}_j \ddd g_s \quad \text{and} \quad {M^i}^* \ddd h_j = a^{is}_j \ddd h_s.    
\end{equation}
By direct computation, using the general identity
$$
\langle gA, hB\rangle = gh \langle A, B\rangle + hAB \otimes \ddd g - h A \otimes B^*\ddd g
+ gA^*  \ddd h \otimes  B - g\ddd h \otimes  AB   
$$
for the bracket $\langle~,~\rangle$, we get  
$$
\begin{aligned}
& \langle M, N \rangle  = \langle g_s M^s,  h_m M^m\rangle =   \\
=\ & g_s h_m \langle M^s, M^m\rangle 
+ h_mM^sM^m {\otimes} \,\ddd g_s 
- h_m M^s {\otimes} {M^m}^*\ddd g_s
+ g_s{M^s}^*  \ddd h_m {\otimes}  M^m 
- g_s\ddd h_m {\otimes} M^sM^m \\
=\ & g_s h_m \langle M^s, M^m\rangle   
+ h_m a^{ms}_r M^r {\otimes} \, \ddd g_s  
- M^r {\otimes} \, h_s {M^s}^* \ddd g_r   
+ g_s {M^s}^* \ddd h_r {\otimes} M^r    
- g_s  \ddd h_m {\otimes}\, a^{sm}_r M^r  \\
=\ &
g_s h_m \langle M^s, M^m\rangle  - 
 M^r \otimes  h_s \bigl({M^s}^* \ddd g_r - a^{sm}_r \ddd g_m\bigr) + g_s  \bigl( {M^s}^* \ddd h_r -  a^{sm}_r \ddd h_m \bigr)\otimes M^r  = 0, 
\end{aligned}   
$$
which completes the proof of the second statement.
Here, at the final step, we used \eqref{compute0} and the fact that $\langle M^s, M^m\rangle =0$.

For the third statement, take $\alpha =\ddd f$ to be a common conservation law of $M^i$'s. This implies that $\ddd (M^{i*} \ddd f) = 0$ for $i = 1, \dots, n$.  Hence, by direct computation, we obtain
$$
\ddd (M^* \ddd f) = \ddd(g_s {M^s}^* \ddd f) = \ddd g_s \wedge {M^s}^* \ddd f.
$$
Let us calculate the value of this 2-form on the pair of vector fields $M^i\xi, M^j \xi$, where $\xi$ is generic in the sense that ${M^1}\xi, \dots, {M^n}\xi$ are linearly independent, i.e., form a basis of the tangents space.
 In the formula below, we use  $\langle \,;  \rangle$ for pairing of 1-forms and vector fields:
$$
\begin{aligned}
\ddd g_s \wedge {M^s}^* \ddd f (M^i\xi, M^j\xi)   &= \langle  \ddd g_s; M^i \xi \rangle \langle {M^s}^* \ddd f; M^j \xi \rangle - \langle \ddd g_s; M^j \xi \rangle \langle {M^s}^* \ddd f; M^i \xi \rangle = \\
& = \langle  \ddd g_s; M^i \xi \rangle \langle \ddd f; M^s M^j \xi \rangle 
      - \langle \ddd g_s; M^j \xi \rangle \langle \ddd f; M^s M^i \xi \rangle \\
& = a^{js}_m \langle  \ddd g_s; M^i \xi \rangle \langle \ddd f; M^m \xi \rangle  
      - a^{is}_m \langle  \ddd g_s; M^j \xi \rangle \langle \ddd f; M^m \xi \rangle \\ 
& = \langle  a^{js}_m \ddd g_s; M^i \xi \rangle \langle \ddd f; M^m \xi \rangle  
      - \langle  a^{is}_m \ddd g_s; M^j \xi \rangle \langle \ddd f; M^m \xi \rangle \\
& = \langle  M^{j*} \ddd g_m; M^i \xi \rangle \langle \ddd f; M^m \xi \rangle 
      - \langle  M^{i*} \ddd g_m; M^j \xi \rangle \langle \ddd f; M^m \xi \rangle \\
& = \langle  \ddd g_m; M^i M^j \xi \rangle \langle \ddd f; M^m \xi \rangle  
      - \langle  \ddd g_m; M^i M^j \xi \rangle \langle \ddd f; M^m \xi \rangle =0.
\end{aligned}
$$
Here we used identity  $M^{j*} \ddd g_m =   a^{js}_m \ddd g_s$   from Lemma \ref{dfg1}.  As ${M^1}\xi, \dots, {M^n}\xi$ form a basis of the tangent space, we conclude that the corresponding 2-form identically vanishes. Thus, $\ddd (M^* \ddd f) = 0$, and the third statement is proved.

To prove the fourth statement, first notice that in view of already proven statements, it is sufficient to prove it for any particular basis in $\Sym$  (see also Remark \ref{rem1}).  Hence, without loss of generality,  we may assume that $M^1 = \Id$. In particular, $M^i$'s satisfy  \eqref{eq:Id_assumption} and all the statements of Theorem \ref{t2} hold with $M^i$ and $K^j$ interchanged.
Thus, we get a collection of operators $K_i$, which are symmetries of each other.

Due to duality between $M^i$ and $K_j$,  Theorem \ref{t2}  and its proof still work in their original form, i.e., without interchanging $M^i$ and $K_j$. In particular, in the formal or analytic category, Lemma \ref{dfg4} implies the existence of a symmetry $K = t_1(u, x) K_1 + \dots + t_n(u, x) K_n$ whose coefficients (due to the second statement of Theorem \ref{t2}) provide common conservation law for $M^i$ with the required properties. This completes the proof  of Theorem \ref{t3}.
 \end{proof}

As explained in Remark \ref{rem1}, the {\it generating} operators $M^i\in\Sym$ (we may refer to them as a {\it basis})  can be chosen in many different ways. The only condition is that every $M\in \Sym$ is a linear combination of $M_i\in\Sym$ with functional coefficients. 
We say that $\Sym$ is {\it flat} if there exists a basis $M^1,\dots, M^n$ of $\Sym$ such that, in some coordinate system,  the matrices of $M^i$ are all constant. We call both the corresponding basis and coordinate system {\it flat}. The flat coordinates are in no way unique.

Locally, flat symmetric algebras $\Sym$ are in natural correspondence with Frobenius algebras $(\mathfrak a,\star)$. Indeed, take an arbitrary Frobenius algebra $\mathfrak a$ with a basis $e_1,\dots, e_n$ and consider the matrices $M^i\in \gl(n,\R)$ of the operators $\mathsf R_{e_i}$ in the basis $e_1,\dots, e_n$ (recall that $\mathsf R_\xi: \mathfrak a \to \mathfrak a$ is defined by $\mathsf R_{\xi}\eta = \eta \star \xi$).
We may think of $M^i$'s as constant commuting  operator fields on $\mathsf M = \R^n$. By Theorem \ref{t1}, they satisfy conditions (A1) and (A2).  Then, by construction, the symmetry algebra $\Sym =\Sym(M^1,\dots, M^n)$  is flat and at each point, $\Sym_{x}$ is obviously isomorphic to $\mathfrak a$.  Conversely, if $M^1,\dots, M^n$ a flat basis of $\Sym$, then using flat coordinates, we may consider $M^i$'s as constant $n\times n$ matrices such that $\Span(M^1,\dots, M^n)\subset \gl(n,\R)$ is a Frobenius algebra (more specifically, the image of the regular representation of a certain Frobenius algebra $\mathfrak a$). The next theorem describes local structure of flat symmetry algebras $\Sym$ constructed in this way.

\begin{Theorem}\label{t4}
Let $\Sym$ be a flat symmetry algebra with a flat basis $M^1,\dots, M^n$ (we may think of them as constant $n\times n$ matrices).  Consider a constant vector field $\xi$ such that $M^1\xi, \dots, M^n\xi$ are linearly independent\footnote{Here we mean that $\xi$ has constant components in the flat coordinate coordinate system associated with $M^i$'s. The existence of such a vector field is guaranteed by condition (A1).} and introduce flat coordinates 
$u^1, \dots, u^n$ such that $\pd{}{u^i} = M^i \xi$. Then, 
\begin{enumerate}
    \item The operator field 
    $$
    U = u^1 M^1 + \dots + u^n M^n
    $$
    is a common strong symmetry of $M^i$'s, i.e., $U \in \Sym$.
    \item For any collection of analytic functions $f_i$,  $i = 1, \dots, n$, the operator
    \begin{equation}\label{solve}
     M = f_1(U) M^1 + \dots + f_n(U) M^n,  
    \end{equation}
     is a common strong symmetry of $M^i$'s, i.e., $M\in \Sym$.
    \item Locally, in the analytic category  every operator $M\in \Sym$  can be written in the form \eqref{solve} for an appropriate choice of $f_i$, $i=1,\dots,n$.
\end{enumerate}

\end{Theorem}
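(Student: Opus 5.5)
Throughout, $M^i$ are constant matrices in the coordinates $u$, and the $M^j$ commute. The structure constants $a^{ij}_s$ are constant as well.

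For statement 1 we use Lemma \ref{dfg1} in the form \eqref{compute0}. By Theorem \ref{t3}(2) it is enough to show that $U$ is a common (not necessarily strong) symmetry. With $g_j=u^j$, this reduces to the identity ${M^i}^*\ddd u^j = a^{is}_j\,\ddd u^s$, up to index placement. To check it, we evaluate both sides on the basis vectors $\partial_{u^k}=M^k\xi$. On the left we get $\ddd u^j(M^iM^k\xi)=\ddd u^j(a^{ik}_s M^s\xi)=a^{ik}_j$. On the right we get $a^{is}_j\delta^s_k=a^{ik}_j$. These agree, so $U\in\Sym$.

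For statement 2, $\Sym$ is a commutative algebra closed under products (strong symmetries form an algebra, Lemma 2.1 of \cite{st}). Hence every polynomial $p(U)$ lies in $\Sym$, and so does $p(U)M^i$. Moreover, the conditions defining $\Sym$ are linear first-order PDEs in the coefficients $g_j$. For an analytic $f$, the series $f(U)=\sum c_k U^k$ converges locally, together with its derivatives. Passing to the limit, $f(U)\in\Sym$, and therefore $M=\sum f_i(U)M^i\in\Sym$. A cleaner way to argue is to write $f(U)M^i$ explicitly as $\sum_j g_j(u)M^j$ and verify \eqref{compute0} directly, using the chain rule and $U$-equivariance. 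The limiting argument seems sufficient, though.

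For statement 3, take $M=\sum g_jM^j\in\Sym$. By \eqref{compute0}, ${M^i}^*\ddd g_j=a^{is}_j\ddd g_s$. Using the identity from statement 1, this system has the following meaning. Identify $T_u$ with the Frobenius algebra $\mathfrak a$ via $\Phi_\xi$, and set $G(u)=\sum g_j(u)e_j$, so that $M=\mathsf R_{G}$. The system then says that the differential $dG_u:\mathfrak a\to\mathfrak a$ commutes with all multiplications, i.e. $dG_u$ lies in the centraliser, which equals $\mathfrak A$ by Lemma \ref{lem:2.1}. In other words, $G$ is "$\mathfrak a$-differentiable" (a Scheffers/Lorch-type analytic function on the algebra $\mathfrak a$), with $u\in\mathfrak a$ being exactly the point $\sum u^ie_i$, which corresponds to $U$.

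The remaining step is the classical fact that, locally, every analytic $\mathfrak a$-differentiable map is of the form $G(u)=\sum_i f_i(u)\star e_i$ with $f_i$ real-analytic functions of the algebra element $u$, i.e. of $U$. This is the main obstacle. I would prove it by power series. Expand $G$ at a point $u_0$ in powers of $w=u-u_0$. The condition that $dG$ is $\mathfrak a$-linear forces each homogeneous component $G_k$ to have all of its partial derivatives $\mathfrak a$-linear. By induction on $k$ (polarising the symmetric multilinear form), this shows that $G_k(w)=c_k\star w^{\star k}$ for some $c_k\in\mathfrak a$. Writing $c_k=\sum_i c_{k,i}e_i$ gives $f_i(t)=\sum_k c_{k,i}(t-u_0)^k$, and the series converges by Cauchy estimates on $G$. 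Translating back, $M=\mathsf R_G=\sum_i f_i(U)M^i$, as claimed.
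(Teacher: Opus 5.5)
Your proof is correct. For statements 1 and 2 it is essentially the paper's argument. The paper substitutes $U$ into formula \eqref{top4} and reads off that the full bracket $\langle M^i,U\rangle$ vanishes, because ${M^i}^*\ddd u^j=a^{is}_j\ddd u^s$. You verify the same identity, obtain a common symmetry from Lemma \ref{dfg1}, and upgrade it to a strong one via Theorem \ref{t3}(2). That is the same content with one extra citation.

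For statement 3 you take a genuinely different route. The paper rewrites \eqref{compute0} as $M^i\,\partial g/\partial u=\partial g/\partial u\,M^i$ and asserts that a solution is determined by its restriction to the line $u(t)=t\xi$. It then uses $U=t\,\Id$ on that line and compares $g$ with $\sum f_i(U)M^i$, $f_i(t)=g_i(t\xi)$, which lies in $\Sym$ by statement 2. You instead recognise the system as $\mathfrak a$-differentiability of $G$ and compute its Taylor expansion directly. Polarising shows each homogeneous component is $\mathfrak a$-multilinear, so $G_k(w)=c_k\star w^{\star k}$ with $c_k=G_k(e)$; symmetry of the polarised form already gives linearity in every slot, so no induction is needed.

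What your route buys:
\begin{itemize}
\item It produces the representation directly, without appealing to statement 2.
\item It actually proves the ``determined by the restriction to the line'' claim, which the paper only sketches by counting derivatives.
\item The relation $c_k=G_k(e)$ shows your $f_i$ are exactly the Taylor series of $t\mapsto g_i(u_0+t\xi)$, i.e.\ the paper's $f_i$.
\end{itemize}
The paper's route is shorter once statement 2 is available.

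One point needs fixing. The formula $f_i(t)=\sum_k c_{k,i}(t-u_0)^k$ only makes sense if $u_0$ is a multiple $t_0\xi$ of the unit. For a general base point you get $\sum_k c_{k,i}(U-U_0)^k$ with a non-scalar constant $U_0=\sum_j u_0^jM^j$, which is not of the form $f_i(U)$ for a function $f_i$ of one variable. So expand at a point of the line $\R\xi$. For instance, translate the flat coordinates, which are fixed only up to additive constants, so that the point of interest is the origin. The paper makes the same normalisation implicitly, since its argument works near the line $t\xi$.
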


\begin{Remark}{\rm
Formula  \eqref{solve} describes all real analytic common symmetries of $M^i$, i.e., gives an explicit local parametrisation of $\Sym$ in terms of $n$ functions of one variable in the analytic category.  There can, however, exist smooth symmetries which are not real-analytic. The existence and description of such symmetries essentially depend on the Frobenius algebra $\mathfrak a = \Span (M^1,\dots,M^n)$. Roughly speaking, formula \eqref{solve} still works for smooth functions $f_i$ as soon as $f_i(U)$ is well defined,  but this in not always the case.  
}
\end{Remark}

\begin{proof}
As before, we introduce $a^{ij}_s$ from $M^i{\cdot}M^j = a^{ij}_s M^s$. Then, in the coordinates $u^1,\dots, u^n$, we have
$$
M^i \pd{}{u^j} = M^i M^j \xi = a^{ij}_s M^s \xi = \sum_{s=1}^n a^{ij}_s \pd{}{u^s},
$$
which also implies  
\begin{equation}
\label{eq:n14}
{M^i}^* \ddd u^j = \sum_{s = 1}^n a^{is}_j \ddd u^s. 
\end{equation}
Replacing $K$ with $U = u^1 M^1 + \dots + u^n M^n$ and $K_i$ with $M^i$ in formula \eqref{top4}, we get   
$$
\langle M^i, U \rangle  
= \sum_{s=1}^n u^s \langle M^i, M^s \rangle + \sum_{j=1}^n \left({M^i}^* \ddd u^j -  \sum_{s=1}^n a^{is}_j\ddd u^s \right) \otimes M^j  \eta
$$
The first summand vanishes since $M^i$ are constant in the coordinates $u^1,\dots, u^n$,  and the second summand vanishes because of  \eqref{eq:n14}. Thus, $\langle M^i, U \rangle = 0$, meaning that $U$ is a common strong symmetry of $M^i$'s and, therefore, $U\in\Sym$.

Recall that the product of strong symmetries is a strong symmetry (Lemma 2.1 in \cite{st}). Therefore,  every polynomial $p(U)$ (with constant coefficients) or matrix analytic function $f(U)$ is a strong symmetry also.  Hence, formula \eqref{solve} indeed produces a common strong symmetry for $M^i$. Thus, the second statement holds.

Now, let us proceed with the third statement. Every common symmetry $M$ has the form $M = g_1(u) M^1 + \dots + g_n(u) M^n$,  where $g_i$'s satisfy relations \eqref{compute0}. Due to 
\eqref{eq:n14}, we notice that $(M^i)^j_k = a^{ik}_j$. Therefore, \eqref{compute0} can be written as the following over-determined system
$$
M^i \left( \pd{g}{u}\right) = \left( \pd{g}{u}\right) M^i, \quad i = 1, \dots, n,
$$
where $\Bigl( \pd{g}{u} \Bigr)$ is the Jacobi matrix for the system of functions $g_1, \dots, g_n$. Due to Lemma \ref{lem:2.1} the matrix $\Bigl(\pd{g}{u} \Bigr)$ is defined modulo $\Span_{\R}(M^1, \dots, M^n)$. 

This implies that $n(n - 1)$ derivatives from $\pd{g}{u}$ can be expressed as linear combinations of the rest of $n$ derivatives. More specifically,  using the fact that $M^i \xi = \frac{\partial}{\partial u^i}$,  we can observe that every local solution  $g=(g_1, \dots, g_n)$ is uniquely determined by its restriction to the straight line $u(t) = t\xi$ or, in more details, $u^\alpha(t) = t \xi^\alpha$, where $\xi^\alpha$ are the components of $\xi$ in the coordinates $u^1,\dots, u^n$.  The next observation is that $\xi^1 M_1 +\dots + \xi^n M^n = \Id$.  This easily follows from the fact that two operators $M, M'\in \Span (M^1,\dots,M^n)$ coincide if and only if $M\xi = M'\xi$  (see assumption (A1)): indeed, we have $(\xi^1 M_1 +\dots + \xi^n M^n)\xi = \sum_i \xi^i \frac{\partial}{\partial u^i} = \xi = \Id \, \xi$.  

Now consider an arbitrary symmetry  $g_1 (u) M^1 + \dots g_n(u) M^n$ with initial conditions $g_i(u(t)) = f_i(t)$ and $f_i$ being  analytic functions.   We claim that this symmetry can be written in the form of \eqref{solve}, namely,
$M = f_1(U) M^1 + \dots + f_n(U)M^n$. To prove this, it suffices to notice that the restriction of $U$ onto the curve $u(t)$ is
$$
U(u(t)) = u^1(t) M^1 + \dots + u^n(t) M^n = t ( \xi^1 M^1 + \dots + \xi^n M^n ) = t \, \Id,
$$
and, therefore, the restrictions of $M=f_1(U) M^1 + \dots + f_n(U)M^n$ and  $g_1 (u) M^1 + \dots g_n(u) M^n$ onto the initial curve $u(t)$ coincide. Indeed,   
$$
M(u(t)) = \sum_i f_i (t\,\Id) M^i = \sum_i f_i(t) M^i  =  \sum g_i(u(t)) M_i,
$$
which completes the proof.
\end{proof}

\begin{Remark}
\rm{
A natural and important example of algebras $\Sym$ is the symmetry algebra $\Sym\,L$ of a $\gl$-regular Nijenhuis operator $L$, that is, 
$$
\Sym\, L = \{ M\, \mbox{such that } ML=LM \mbox{ and } \langle M,L\rangle (\xi,\xi) = 0 \  \forall \xi  \}
$$
In this case, all symmetries $M\in \Sym\, L$ are strong, i.e., $\langle M,L\rangle=0$ (see Theorem 1.2 in \cite{nij4}). The powers of $L$, i.e., $\Id, L,\dots, L^{n-1}$ form a natural basis of $\Sym\, L$. If $L$ is algebraically generic (i.e, the multiplicities of its eigenvalues do not change), then the results of \cite{nij4} (Theorems 1.1 and 1.4) imply that 
$\Sym$ is locally flat.
}    
\end{Remark}


\section{Application of symmetry algebras to finite-dimensional integrable systems}\label{s4}

In this section, we study Poisson commuting functions $F_s: T^*\mathsf M \to \R$, $s = 1, \dots, n$,  quadratic in momenta. They can be naturally identified with symmetric bilinear forms $h_s$ on $T^*\mathsf M^n$:
$$
F_s(x,p) = \sum_{i,j} h_s^{ij}(x) p_i p_j = h_s (p,p). 
$$
If $h_1$ is non-degenerate,  then we can interpret it as a contravariant (pseudo-)Riemann metric, and the functions can be equivalently written in the form
$
F_s  =  h_1 ( K_s^* p , p),
$
where the operators 
\begin{equation}\label{killing0}
K_s = h_s h_1^{-1}.     
\end{equation}
are Killing $(1, 1)$-tensors of the metric $h_1$. 
The next theorem provides a natural method for constructing such functions from a symmetry algebra $\Sym$.

\begin{Theorem}\label{t5}
Let $\Sym$ be a symmetry algebra,  $M^1, \dots, M^n$ be a basis of    $\Sym$, and $\alpha$ be a common conservation law of $M^i$'s such that ${M^1}^*\alpha, \dots, {M^n}^*\alpha$ are linearly independent pointwise. Consider the canonical coordinate system $u^1, \dots, u^n, p_1,\dots,p_n$ on $T^*\mathsf M$ such that $\ddd u^i = M^{i*} \alpha$, $i = 1, \dots, n$ and set, as before,  $M^i {\cdot} M^j = a^{ij}_s M^s$, $a^{ij}_s=a^{ij}_s(u)$. Then
\begin{enumerate}  
    \item The quadratic in momenta functions
    $$
    F_s(u,p) = a^{ij}_s(u) p_i  p_j, \quad s = 1, \dots, n,
    $$
    pairwise Poisson commute with respect to the canonical Poisson structure on $T^*\mathsf M$;
    \item The functions $F_s(u,p)$ satisfy the condition
    $$
    \det \Big( \pd{F}{p}\Big) \neq 0\qquad \mbox{for a generic $p\in T^*_x\mathsf M$};
    $$
    \item $M^i$ are self-adjoint with respect to the forms $h_s=\left( a^{ij}_s \right)$, $s = 1, \dots, n$. If $h_1 = \left(a^{ij}_1\right)$ is non-degenerate, then the corresponding Killing tensors $K_s$ given by  \eqref{killing0} are dual to $M^i$  with respect to $\left(a^{ij}_1\right)$ treated as a Frobenius form in the sense of Theorem \ref{t2}\footnote{Notice that $a^{ij}_1 = a^{ij}_k a^k$, where $a=(a_1,\dots, a_n)=(1,0,\dots, 0)$, which agrees with the assumptions of Theorem \ref{t2}.}, that is, $M_i$ and $K_s$ are related by
    $$
    M^i = a^{is}_1 K_s.   
    $$    
\end{enumerate}
\end{Theorem}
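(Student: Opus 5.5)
The plan is to work entirely in the coordinates $u^1,\dots,u^n$ with $\ddd u^i={M^i}^*\alpha$, and to translate every hypothesis into algebraic and differential identities on the functions $a^{ij}_s(u)$. Everything then reduces to these identities.

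\textbf{Step 1: the basic identities.} Applying ${M^i}^*$ to $\ddd u^j$ and using that the $M^i$ commute gives
$$
{M^i}^*\ddd u^j={M^i}^*{M^j}^*\alpha=(M^iM^j)^*\alpha=a^{ij}_s{M^s}^*\alpha=a^{ij}_s\,\ddd u^s .
$$
Hence the matrix of $M^i$ in these coordinates is $(M^i)^j_k=a^{ij}_k$. Next:
\begin{itemize}
\item[(i)] $a^{ij}_s=a^{ji}_s$, by commutativity of the $M^i$.
\item[(ii)] $a^{ij}_l a^{lk}_m=a^{jk}_l a^{li}_m$, by associativity and commutativity.
\item[(iii)] $\partial_k a^{ij}_s=\partial_s a^{ij}_k$. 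To see this, note that $M^iM^j\in\Sym$ because $\Sym$ is an algebra. By Theorem \ref{t3}(3), $\alpha$ is a conservation law of $M^iM^j$. Therefore ${M^i}^*\ddd u^j=(M^iM^j)^*\alpha=a^{ij}_s\ddd u^s$ is closed.
\end{itemize}
Identity (ii) immediately gives the first half of item 3. Indeed, $(M^k h_s)^{ij}=a^{ki}_l a^{lj}_s$ is symmetric in $i,j$, so each $M^k$ is self-adjoint with respect to every $h_s$.

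\textbf{Step 2: item 2.} We have $\partial F_s/\partial p_i=2a^{ij}_s p_j$, and this matrix is $2\bigl(p_jM^j\bigr)$ up to transposition. Since $\Sym_x$ is unital, $\Id=c_j(x)M^j$ at each point. Thus $\det(p_jM^j)$ is a polynomial in $p$ that does not vanish at $p=c(x)$. It is therefore nonzero for generic $p$.

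\textbf{Step 3: item 3, duality.} Assume $h_1=(a^{ij}_1)$ is nondegenerate. Then $K_s=h_sh_1^{-1}$, and the claim $M^i=a^{is}_1K_s$ is equivalent to $M^ih_1=a^{is}_1h_s$. In components this reads $a^{ik}_l a^{lj}_1=a^{is}_1a^{kj}_s$, which is again (ii). This is exactly the relation \eqref{eq:n18} with $b^{is}=a^{is}_1$, i.e.\ the form $b_a$ with $a=(1,0,\dots,0)$. It also shows that $K_s\in\Span(M^1(x),\dots,M^n(x))$.

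\textbf{Step 4: item 1, the main obstacle.} This step is the Poisson commutativity. Computing directly,
$$
\{F_s,F_t\}=2a^{ij}_s p_j\,\partial_i a^{kl}_t\,p_kp_l-(s\leftrightarrow t).
$$
By (iii) and (i), $\partial_i a^{kl}_t=\partial_t a^{kl}_i$. So $\{F_s,F_t\}=2\bigl(P^i_s\,\partial_tF_i-P^i_t\,\partial_sF_i\bigr)$, where $P^i_s=a^{ij}_sp_j$ and $\partial$ is taken at fixed $p$.

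To show this vanishes I will proceed as follows.
\begin{itemize}
\item Contract (ii) with $p_jp_kp_m$-type factors, obtaining $P^i_sF_i=C_s(p)$, where $C_s$ is totally symmetric cubic data.
\item Differentiate this identity with respect to $u^t$ and antisymmetrize in $s,t$. This yields $P^i_s\partial_tF_i-P^i_t\partial_sF_i=-(\partial_tP^i_s-\partial_sP^i_t)F_i$, up to the antisymmetrized derivative of $C_s$.
\item Both remaining terms vanish by (iii), since $\partial_ta^{ij}_s=\partial_sa^{ij}_t$.
\end{itemize}
If this bookkeeping fails, my fallback is a cleaner route. Use $\ddd u^k$, $k=1,\dots,n$, as $n$ independent common conservation laws of all $M^i$. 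Then Lemma \ref{dfg3} and Theorem \ref{t2}(1)--(2) show that the dual $K_s$ are mutual symmetries with $\ddd h^j$ of the prescribed form. Finally, invoke the standard criterion that algebraically commuting Killing tensors with vanishing symmetric bracket Poisson-commute.
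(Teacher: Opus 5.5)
Your identities (i)--(iii) and your arguments for items 2 and 3 are correct and essentially the paper's; self-adjointness and $M^i=a^{is}_1K_s$ both reduce to associativity of $a^{ij}_s$. The gap is in Step 4, at exactly the point that carries the content of item 1. Your reduction $\{F_s,F_t\}=2(\partial_tC_s-\partial_sC_t)$ is fine, where $C_s=P^i_sF_i$ is the coefficient of $M^s$ in $(p_iM^i)^3$. But the claim that $\partial_tC_s-\partial_sC_t$ vanishes ``by (iii)'' is false. Write $C_s=a^{kl}_i a^{ij}_s\,p_jp_kp_l$ and apply (iii) to both factors. This gives $\partial_tC_s-\partial_sC_t=P^i_s\,\partial_iF_t-P^i_t\,\partial_iF_s=\tfrac12\{F_s,F_t\}$, so differentiating and antisymmetrising merely returns the bracket you started from.

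In fact (i)--(iii) do not imply commutativity at all. Take $n=2$, $M^1=\Id$, $M^2=\begin{pmatrix}0&1\\ u^1&0\end{pmatrix}$ and $\alpha=\ddd u^1$. Then ${M^2}^*\alpha=\ddd u^2$ and ${M^2}^*\ddd u^2=u^1\ddd u^1$, so (i)--(iii) hold. Yet $F_1=p_1^2+u^1p_2^2$ and $F_2=2p_1p_2$ give $\{F_1,F_2\}=-2p_2^3\neq0$; of course $M^2$ is not Nijenhuis here.

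Your fallback has the same defect. It uses only that the $\ddd u^k$ are common conservation laws of the $M^i$, which is precisely (iii); in the example, $K_1=\Id$ and $K_2$ are trivially commuting mutual symmetries. In addition, it needs $h_1$ nondegenerate, which item 1 does not assume. It also presupposes that the $K_s$ are Killing tensors of $h_1$, i.e.\ $\{F_1,F_s\}=0$, which is part of what you must prove.

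The missing ingredient is the cubic analogue of (iii), and it comes from the same source you used for (iii). Write $M^iM^jM^k=a^{ijk}_sM^s$. Since $M^iM^jM^k\in\Sym$, Theorem~\ref{t3}(3) says the form $(M^iM^jM^k)^*\alpha=a^{ijk}_s\,\ddd u^s$ is closed, so $\partial_ta^{ijk}_s=\partial_sa^{ijk}_t$. Because $C_s=a^{ijk}_s\,p_ip_jp_k$, this gives $\partial_tC_s=\partial_sC_t$ and hence $\{F_s,F_t\}=0$. This is where the Nijenhuis (strong symmetry) hypothesis genuinely enters, just as the third $c$-derivatives do in the proof of Theorem~\ref{t6}.

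With this repair your argument is a valid alternative to the paper's. The paper writes the level set $F_s=c_s$ as a graph $p_i=g_i(u;c)$ and observes that $M=g_iM^i$ satisfies $M^2=c_sM^s$. It then expresses $M=M_0\sqrt{SM_0^{-2}}\in\Sym$ and concludes from Theorem~\ref{t3}(3) that $M^*\alpha=g_i\,\ddd u^i$ is closed, i.e.\ the level sets are Lagrangian. Your corrected route needs only polynomial elements of $\Sym$, with no inverses or matrix square roots. The paper's route, in exchange, also produces Hamilton's principal function.
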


\begin{proof}
We start with the second statement which is purely algebraic. 
We  have $\pd{F_s}{p_j}  = 2 a^{ij}_s p_i$.   To show that for a generic $p\in T^*_x\mathsf M$, the matrix   $\left(\pd{F_s}{p_j} \right)$ is non-degenerate,  we recall that at each point $x\in \mathsf M$, the space $\Span (M^1(x),\dots, M^n(x))$ contains the identity operator, i.e.,  $\Id = \sum a_i M^i(x)$ for some $a_i\in \R$. This implies that $a^{ij}_s a_i = \delta^j_s$ so that generically the matrix  $\left(a^{ij}_s p_i\right)$  is non-degenerate.
Thus, the second statement  holds.

Notice that the functions $F_s$ satisfy the (matrix) relation
\begin{equation}
\label{eq:n15}
F_1 M^1 + \dots + F_n M^n = \Bigl(p_1 M^1 + \dots + p_n M^n\Bigr)^2,
\end{equation}
and can be defined from it. Indeed,
$$
\Bigl(p_1 M^1 + \dots + p_n M^n\Bigr)^2 = \sum_{i,j} M^i{\cdot}M^j p_i p_j =
\sum_{i,j,s}  a^{ij}_s M^s p_i p_j = \sum_s \left( \sum_{i,j} a^{ij}_s p_i p_j\right) M^s = F_s M^s. 
$$

Since $\det \left(\pd{F_s}{p_j}\right)\ne 0$ for a generic point $(x_0,p_0)\in T^*\mathsf M^n$, then in a neighbourhood of such a point, the relations $F_s(x,p) = c_s$, $s=1,\dots, n$, can be resolved with respect to $p$, i.e., there exists smooth functions $g_i(u;c)$ such that the graph $p_i = g_i(u; c)$ coincides with the common level surface $\mathsf X_c=\{F_s(u,p) = c_s, \ s=1,\dots,n\}$ of the functions $F_s$ (here we treat $c_i$ as parameters). Therefore, in view of \eqref{eq:n15},  the operator field $M=g_1 M^1 + \dots + g_n M^n$ satisfies the identity 
$$
M^2 = S, \quad \mbox{where $S\overset{\mathrm{def}}{=} c_1 M^1 + \dots + c_n M^n$}.
$$

Without loss of generality we may assume that $\det M_0 \ne 0$, where $M_0(u) = p_1^0 M^1(u)+\dots+ p_n^0 M^n(u)$.  Let us set $M = \widehat M {\cdot} M_0$.  Notice that $M$, $M_0$, $\widehat M$ and $S$ commute.  Hence,  the relation $M^2 = S$ can be written as
$\widehat M^2 = \widehat S$, where $\widehat S = S M_0^{-2}$.    By construction, at the point $u_0$ we have $\widehat M (u_0) = \Id$.   We now use the fact that in a neighborhood of identity $\Id \in \gl(n,\R)$,  the matrix map $\widehat M \to \widehat M^2 = \widehat S$ is invertible, and moreover, the inverse map 
$\widehat M =  \sqrt{\widehat S}$ is a real analytic matrix function (given by the same Taylor series as the usual square root, i.e., $\sqrt{1+x} = 1 +\frac{1}{2} x - \frac{1}{8} x^2 + \dots$).  Thus we can write $M$ in the form
$$
M = M_0 \sqrt{S M_0^{-2}}
$$

Notice that for each $c=(c_1,\dots,c_n)$,  the operator $S$ belongs to $\Sym$.  The same is true for $M_0$ and $M_0^{-2}$, since $M_0$ is a linear combination of $M^i$'s with constant coefficients. Thus, $M\in \Sym$ too. Therefore, by Theorem \ref{t3}, $\alpha$ is a conservation law for $M$, i.e.,  $M^*\alpha$ is closed. On the other hand,
$$
M^* \alpha = g_i {M^i}^* \alpha = g_i \, \ddd u^i.
$$
Thus, the closedness of $M^*\alpha$ implies $\pd{g_i}{u^j} = \pd{g_j}{u^i}$ for all $i,j=1,\dots,n$. This means that $X_c$ is Lagrangian for generic $c$ and, therefore, $h_i$'s Poisson commute. Thus, the first statement is proved.

To show that $M^i$'s are self-adjoint w.r.t. $h_s$, we need to check the identity 
\begin{equation}
\label{eq:n21}
(M^i)^j_k  (h_s)^{km} = (M^i)^m_k  (h_s)^{kj}   
\end{equation}
where $(M^i)^j_k$ and $(h_s)^{km}$ denote the components of $M^i$ and $h_s$ in coordinates $u^1,\dots, u^n$.  By construction, $(h_s)^{km}= a_s^{km}$ and 
$
{M^i}^* \ddd u^j = {M^i}^* {M^j}^* \alpha = a^{ij}_s {M^s}^* \alpha = a^{ij}_s \ddd u^s
$,
which means that 
$(M^i)^j_k = a^{ij}_k = a^{ji}_k$. Hence \eqref{eq:n21} takes the form
$$
a^{ij}_k a_s^{km} = a^{im}_k  a_s^{kj},   
$$
which is true, since $a^{ij}_m$ is the structure tensor of a commutative associative algebra.

Finally, assume that the form $b^{ij} = a^{ij}_1$ is non-degenerate.  Then, in the context of Theorems \ref{t1} and \ref{t2}, we can think of it as a Frobenius form related to the operators $M^1,\dots, M^n$  (the components of the corresponding covector $a$ in the  basis $M^1,\dots, M^n$ are $a=(1,0,\dots,0)$).  The Killing tensors are given by \eqref{killing0} or, equivalently,  in our coordinates $u^1,\dots,u^n$: 
$$
(K_s)^j_k = b_{km} a_s^{mj},
$$ 
where $b_{km}$ denotes the inverse of $b^{ij}$.  Then,
$$
a^{is}_1 (K_s)^j_k = a^{is}_1\, b_{km} \, a_s^{mj} =  a^{ms}_1\, b_{km}\, a_s^{ij} = \delta^s_k \, a_s^{ij} = a_k^{ij} = (M^i)^j_k,
$$
or shortly, $a^{is}_1 (K_s) = M^i$, as required.

\end{proof}

\begin{Remark}
\rm{
From the argument in the proof above if all the real roots of $c_1 M^{1*} + \dots + c_n M^{n*}$ are positive, the differential of Hamilton's principal function is given by the formula
$$
\ddd W(u, c) = \sqrt{c_1 M^{1*} + \dots + c_n M^{n*}} \, \alpha,
$$
where the root is understood in the sense discussed above.
}    
\end{Remark}

The next example yields an integrable system for the flat symmetry algebra $\Sym$, corresponding to the Frobenius algebra  \eqref{eq:n16} from Example \ref{ex:3.2}.

\begin{Ex}
\rm{
The natural basis of the Frobenius algebra \eqref{eq:n16} is formed by four matrices
$$
M_1 = \begin{pmatrix}
1 & 0 & 0 & 0\\
0 & 1 & 0 & 0 \\
0 & 0 & 1 & 0 \\
0 & 0 & 0 & 1 \\
\end{pmatrix}, \quad
M_2 = \begin{pmatrix}
0 & 0 & 0 & 0\\
1 & 0 & 0 & 0 \\
0 & 0 & 0 & 0 \\
0 & 1 & 0 & 0 \\
\end{pmatrix}, \quad 
M_3 = \begin{pmatrix}
0 & 0 & 0 & 0\\
0 & 0 & 0 & 0 \\
1 & 0 & 0 & 0 \\
0 & 0 & 1 & 0 \\
\end{pmatrix}, \quad
M_4 {=} \begin{pmatrix}
0 & 0 & 0 & 0\\
0 & 0 & 0 & 0 \\
0 & 0 & 0 & 0 \\
1 & 0 & 0 & 0 \\
\end{pmatrix},
$$
which we treat as (constant) operator fields on $\R^4$.  The common strong symmetries of $M_i$'s, i.e., elements of $\Sym$ have the following form
$$
\begin{pmatrix}
f_1  & 0 & 0 & 0 
\\
u_2 \,  f_1' +f_{2} & f_{1}  & 0 & 0 
\\
 u_3 \, f_1' +f_{3} & 0 & f_{1} & 0 
\\
 \frac{\left(u_{2}^{2}+u_{3}^{2}\right) f_1''}{2}+ u_2  f_2' + u_3 f_3' +  u_4 f_1' + f_4
& u_2   f_1' +f_{2}  & u_3  f_1' +f_{3} & f_{1} 
\end{pmatrix},
$$
where $f_1, f_2, f_3$ and $f_4$ are arbitrary functions of $u_1$.
Here we used Theorem \ref{t4}.

If we take $\alpha=\ddd u_4$ as a common conservation law, then the Poisson commuting Hamiltonians corresponding to the basis $M_1, M_2, M_3, M_4 \in \Sym$  are as follows
$$ \{F_{1} = 2 p_{1} p_{4}+p_{2}^{2}+p_{3}^{2}, F_{2} = 2 p_{3} p_{4}, 
F_{3} = 2 p_{2} p_{4}, F_{4} = p_{4}^{2}\}.
$$

They trivially commute with respect to the Poisson bracket. 

If we consider a different  (non-constant) basis of $\Sym$:
$$
\widetilde M_1 = 
\begin{pmatrix}
1 & 0 & 0 & 0\\
0 & 1 &  0 &  0 \\
0 & 0 & 1 & 0 \\
0 & 0 & 0 & 1 \\
\end{pmatrix} \quad \mbox{with } f_1  = 1,\  f_2=f_3=f_4=0,
$$
$$
\widetilde M_2 = \begin{pmatrix}
u_1 & 0 & 0 & 0\\
u_2 & u_1 & 0 & 0 \\
u_3 & 0 & u_1 & 0 \\
u_4 & u_2 & u_3 & u_1 \\
\end{pmatrix}\quad  \mbox{with }
f_1= u_1,\  f_{2}=f_3=f_4=0, 
$$
$$
\widetilde M_3 = \begin{pmatrix}
0 & 0 & 0 & 0\\
u_1 & 0 & 0 & 0 \\
0 & 0 & 0 & 0 \\
u_2 & u_1 & 0 & 0 \\
\end{pmatrix} \quad  \mbox{with }
f_1 = 0, \ f_2 = u_1, \ f_3=f_4=0,
$$
$$
\widetilde M_4 = \begin{pmatrix}
u_1^2 & 0 & 0 & 0\\
2u_1u_2 & u_1^2 & 0 & 0 \\
2u_1u_3 & 0 & u_1^2 & 0 \\
u^2_2{+}u_3^2 {+} 2u_1u_4 & 2u_1u_3 & 2u_1u_2 & u_1^2 \\
\end{pmatrix} \quad  \mbox{with }
f_1  = u_1^2, \  f_2=f_3=f_4=0,
$$
then the commuting Hamiltonians will be quite-nontrivial: 
\begin{eqnarray*}F_{1} & =& 
\frac{2 p_{1} p_{4} u_{1} u_{3}+p_{2}^{2} u_{1} u_{3}-2 p_{2} p_{4} u_{2} u_{3}+p_{3}^{2} u_{1} u_{3}-2 p_{3} p_{4} u_{1} u_{4}+2 p_{3} p_{4} u_{2}^{2}}{u_{1} u_{3} \left(u_{2}^{2}+u_{3}^{2}\right)},\\  F_{2} & = &
\frac{2 p_{4} \left(p_{2} u_{3}-u_{2} p_{3}\right)}{u_{1} u_{3}}, \\
F_{3} &= &
-\frac{2 \left(2 p_{1} p_{4} u_{1} u_{3}+p_{2}^{2} u_{1} u_{3}-2 p_{2} p_{4} u_{2} u_{3}+p_{3}^{2} u_{1} u_{3}-2 p_{3} p_{4} u_{1} u_{4}+p_{3} p_{4} u_{2}^{2}-p_{3} p_{4} u_{3}^{2}\right)}{\left(u_{2}^{2}+u_{3}^{2}\right) u_{3}},\\
 F_{4} &=& 
\frac{2 p_{1} p_{4} u_{1}^{2} u_{3}+p_{2}^{2} u_{1}^{2} u_{3}-2 p_{2} p_{4} u_{1} u_{2} u_{3}+p_{3}^{2} u_{1}^{2} u_{3}-2 p_{3} p_{4} u_{1}^{2} u_{4}-2 p_{3} p_{4} u_{1} u_{3}^{2}+p_{4}^{2} u_{2}^{2} u_{3}+p_{4}^{2} u_{3}^{3}}{\left(u_{2}^{2}+u_{3}^{2}\right) u_{3}}.\end{eqnarray*}

 }
\end{Ex}

The next theorem is the inverse of Theorem \ref{t5}.

\begin{Theorem}\label{t6}
Consider a collection of quadratic in momenta functions $F_s(x,p) = \sum_{i,j} h_s^{ij}(x) p_i p_j$, $s = 1, \dots, n$. Suppose that they satisfy the following conditions:
\begin{enumerate}
    \item[\rm(i)] Functions $F_s$ pairwise commute with respect to the canonical Poisson bracket on  $T^*\mathsf M$;
    \item[\rm(ii)]  $\det \Big( \pd{F}{p}\Big) \neq 0$ for generic $p$;
    \item[\rm(iii)] $h_1^{ij}$ is non-degenerate, and Killing tensors $K_i = h_i h_1^{-1}$ commute pairwise in the algebraic sense, that is,
    $K_i {\cdot} K_j = K_j {\cdot} K_i$.
\end{enumerate}
Then the functions $F_s$ are obtained via the procedure described in Theorem \ref{t5} for an appropriate choice of a symmetry algebra $\Sym$, basis $M_1,\dots,M_n \in \Sym$, and a common conservation law $\alpha$.
\end{Theorem}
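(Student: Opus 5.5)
Following the strategy outlined in the Introduction, the plan has three steps. First, show that $K_1=\Id,K_2,\dots,K_n$ are mutual symmetries spanning an operator Frobenius algebra $\mathcal K$. Second, show that its dual $\mathcal M=\mathcal K^*_a$ with respect to a suitable form consists of Nijenhuis operators. Third, locate a common conservation law $\alpha$ of the $M^i$ and verify that the construction of Theorem \ref{t5} reproduces the $F_s$.

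\emph{Step 1 (Frobenius structure and mutual symmetry).} Condition (ii) gives a covector $p$ with $h_1K_1^*p,\dots,h_1K_n^*p$ linearly independent, i.e.\ $K_s^*p$ independent. Dually, the vector $\xi=h_1p$ satisfies (A1), since $K_s\xi=h_1K_s^*p$ by self-adjointness of $K_s$ with respect to $h_1$. Hence by Theorem \ref{t1} and Lemma \ref{lem:2.1}, $\mathfrak A(x)=\Span(K_i(x))$ is a Frobenius subalgebra containing $\Id$. For mutual symmetry, use Liouville integrability. Near a generic point, by (i) and (ii) the level sets $F_s=c_s$ are Lagrangian graphs $p=\ddd W(x,c)$. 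Differentiating in $c$ shows that the $n$ one-forms $\ddd(\partial W/\partial c_s)$ are independent closed forms. I expect these to be common conservation laws of the $K_i$, at least after recombining them appropriately. Lemma \ref{dfg3}(1) would then give that the $K_i$ are mutual symmetries. This is the classical link between Killing tensors with commuting quadratic integrals and Haantjes/Nijenhuis-type conditions, and it must be checked carefully. An alternative is to expand $\{F_i,F_j\}=0$ as a cubic form in $p$, rewrite it as the Killing-type condition on $h_1$-symmetric $K_i$, and compare it with the symmetric part of $\langle K_i,K_j\rangle$ evaluated on $\xi=h_1p$.

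\emph{Step 2 (dual algebra and Nijenhuis property).} Take $a=(1,0,\dots,0)$, so $b_a(K_i,K_j)=a^1_{ij}$; this matches the footnote in Theorem \ref{t5}. We need $b_a$ to be nondegenerate. If it is not, I will replace $K_1$-coordinates by a generic constant covector $a$, which is harmless because $\Id\in\mathcal K$. Define $M^i=b^{ij}K_j$. By Theorem \ref{t2}(1) the $M^i$ are mutual symmetries, and by Theorem \ref{t2}(3) conservation laws of $\mathcal K$ correspond to symmetries of $\mathcal M$. The key claim is $\langle M^i,M^j\rangle=0$, i.e.\ that $\mathcal M$ consists of Nijenhuis operators. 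I plan to prove it via Lemma \ref{dfg3}(2). Take $u^s=\partial W/\partial c_s$, or rather the functions $f$ dual to these through Theorem \ref{t2}(2),(3). The aim is to produce $n$ independent common conservation laws of the $M^i$ that are also conservation laws of all products $M^iM^j$. Products lie in $\mathcal M$ with functional coefficients, so this amounts to showing that the coefficients $a^{ij}_s$ in the relevant coordinates make $M^{i*}\ddd u^j=a^{ij}_s\ddd u^s$ closed.

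\emph{Step 3 (recovering the construction).} The natural candidate is $\alpha=\ddd f$, where $f$ is the density for which $K_s^*\ddd f$ equals the differential of the coordinate dual to $c_s$. Then define coordinates by $\ddd u^i=M^{i*}\alpha$. Using $M^i=b^{ij}K_j$ and self-adjointness, check that in these coordinates $h_s^{ij}=a^{ij}_s$; this is essentially the computation in Theorem \ref{t5}(3) run backwards. The main obstacle is Step 2: extracting from Poisson commutativity alone that the dual operators are \emph{strong} symmetries, i.e.\ Nijenhuis. There I would first attempt the direct computation in the coordinates $u^i$ defined by the action variables, reducing $\{F_i,F_j\}=0$ to the closedness of $M^{i*}\ddd u^j$.
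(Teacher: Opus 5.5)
\textbf{Steps 1 and 3.} Step 1 rests on an expectation that is false. The forms $\ddd(\partial W/\partial c_s)$ are in general not conservation laws of the Killing tensors $K_i$, even after recombination with constant coefficients. Take the Liouville system $F_1=\frac{p_x^2+p_y^2}{f_1+f_2}$, $F_2=\frac{f_2p_x^2-f_1p_y^2}{f_1+f_2}$. Then $K_2=\mathrm{diag}(f_2(y),-f_1(x))$ and $W=\int\sqrt{c_1f_1+c_2}\,\ddd x+\int\sqrt{c_1f_2-c_2}\,\ddd y$, and one computes $\ddd\bigl(K_2^*\ddd\,\partial_{c_2}W\bigr)=\tfrac12\bigl(f_1'(x)/\sqrt{c_1f_2-c_2}-f_2'(y)/\sqrt{c_1f_1+c_2}\bigr)\,\ddd x\wedge\ddd y\neq0$.

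What is true is that $\ddd(\partial W/\partial c_s)|_{c=c^0}$ is proportional to ${M^s}^*\alpha$. The functions $\partial W/\partial c_s|_{c=c^0}$ are exactly the coordinates $u^s$ in which the projected Hamiltonian fields are $\partial_{u^s}=K_s\partial_{u^1}$. They give conservation laws of the dual operators $M^i$, not of the $K_i$: in these coordinates $K_i^*\ddd u^s=a^s_{ij}\ddd u^j$, which is not closed in general. Mutual symmetry of the $K_i$ is never needed anyway; it follows a posteriori from Theorem \ref{t2}(1) with the roles of $\mathcal K$ and $\mathcal M$ exchanged.

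The same $K$/$M$ confusion breaks Step 3. Requiring $K_s^*\ddd f=\ddd u^s$ for all $s$ forces $\ddd f=\ddd u^1$, since $K_1=\Id$, and then $a^1_{sj}=\delta_{sj}$. This already fails in the example above, where $K_2^2=f_1f_2\,\Id+(f_2-f_1)K_2$. The correct data are:
\begin{itemize}
\item $\alpha=\sum_i g_i(u)\,\ddd u^i$, the pull-back of $p\,\ddd u$ from a regular level set $\mathsf X_0=\{F=c^0\}$, which satisfies ${M^s}^*\alpha\propto\ddd u^s$;
\item the Frobenius form $b(K_i,K_j)=g_{ij}=a^s_{ij}\,g(\partial_{u^s},\partial_{u^1})$, i.e.\ $b_a$ with $a_s\propto c^0_s$.
\end{itemize}
So $a$ and $\alpha$ must come from the same level set, and your proposal does not arrange this.

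\textbf{Step 2.} This is the decisive gap. Closedness of ${M^i}^*\ddd u^j$, or Theorem \ref{t2}(1), gives at best weak mutual symmetry via Lemma \ref{dfg3}(1). Your claim that conservation laws of the products `amount to' this closedness is wrong. The form $\ddd\bigl((M^iM^j)^*\ddd u^k\bigr)=\ddd a^{ij}_s\wedge{M^s}^*\ddd u^k$ does not vanish by itself. Moreover, by \eqref{hold1}, once the single conservation laws are known, those of the products are \emph{equivalent} to $\langle M^i,M^j\rangle=0$, so nothing has been reduced.

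The missing idea is the paper's Lemma \ref{l3}. Define $\widetilde M(u,c)\in\mathfrak A$ by $\widetilde M^*\alpha=\ddd W(\cdot,c)$. Three facts then give, purely algebraically, $\widetilde M^2=\sum_s c_sM^s$, i.e.\ $\widetilde M=\sqrt{\sum_s c_sM^s}$:
\begin{itemize}
\item $h_i(\ddd W,\ddd W)=c_i$;
\item every element of $\mathfrak A$ is self-adjoint with respect to every $h_i$;
\item $(h_i)^{kj}g_j\propto\delta^k_i$, which is exactly what the choice of coordinates $u$ encodes.
\end{itemize}
So the whole $c$-dependence of $\ddd W$ is explicit. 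Differentiate $\ddd W=\widetilde M^*\alpha$ twice and three times in $c$ at $c^0$. This shows that ${M^i}^*{M^j}^*\alpha\propto{M^i}^*\ddd u^j$ and ${M^i}^*{M^j}^*{M^k}^*\alpha\propto(M^iM^j)^*\ddd u^k$ are exact. Hence the $\ddd u^k$ are conservation laws of all $M^i$ and of all products $M^iM^j$. Lemma \ref{dfg3}(2) then gives strong mutual symmetry, hence the Nijenhuis property, and $\alpha$ is the required common conservation law.

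Your instinct to differentiate $W$ in $c$ is the right one. Without the explicit formula for $\widetilde M$, however, you cannot reach the second- and third-order information that strong symmetry requires.
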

\begin{proof}
The idea of the proof is straightforward: we will recover all the necessary data, i.e., the Nijenhuis operators $M_i$ and the 1-form $\alpha =\ddd f$ from the quadratic forms $h_1, \dots, h_n$. 

Consider a common regular level surface of the quadratic functions $F_1,\dots, F_n$ 
$$
\mathsf X_{0} =\{ F_1(x,p)=c_1^0, \dots,  F_n(x,p)=c_n^0\} \subset T^*\mathsf M,
$$
such that the natural projection $\pi : \mathsf X_{0} \to \mathsf M$ is a (local) diffeomorphism. The existence of such a level surface $\mathsf X_0$ follows from condition (ii).

The Hamiltonian vector fields generated by $F_1, \dots, F_n$ are tangent to $\mathsf X_0$, linearly independent, and commute with respect to the standard Lie bracket. Let $\xi_1, \dots, \xi_n$ denote their natural projections on $\mathsf M$ by means of $\pi$.  Since $\pi$ is a diffeomorphism,  $\xi_1, \dots, \xi_n$ are still linearly independent and commute. Hence, we can choose local coordinates $u^1,\dots, u^n$ on $\mathsf M$ in such a way that $\xi_i = \pd{}{u^i}$, $i=1,\dots,n$. 

By construction, we have 
\begin{equation}\label{eq1}
K_i \partial_{u^1} = \partial_{u^i}.    
\end{equation}
In particular, this implies that $K_i$ are linearly independent. Thus, due to Lemma \ref{lem:2.1},   $\mathfrak A(x) = \Span (K_1(x), \dots, K_n(x))\subset \gl(T_x\mathsf M)$  is a unital commutative associative algebra. We denote $K_i K_j = a_{ij}^s K_s$. In our special  coordinates $u^1,\dots, u^n$ we have
\begin{equation}\label{killing}
K_i \pd{}{u^j} = K_i \left(K_j \pd{}{u^1}\right) = a^s_{ij} K_s \pd{}{u^1} = a^s_{ij}  \pd{}{u^s} \quad\mbox{and, in particular,}\quad   (K_i)_j^s = a^s_{ij}. 
\end{equation}
 
By construction, $K_i$ are self-adjoint with respect to $h^{-1}_1 = g$, that is,
$$
g_{is} a^s_{jm} = g_{ms} a^s_{ji}. 
$$
If we formally define the symmetric bilinear form $b$ on the algebra $\mathfrak A(x)$ by setting $b_{ij}\overset{\mathrm{def}}{=}b(K_i, K_j)=g_{ij}$, then the above relation amounts to
$$
b(K_j{\cdot}K_m, K_i) = b(K_m, K_j{\cdot} K_i), 
$$
which means that $b$ is a Frobenius form so that $\mathfrak A(x)$ is a Frobenius algebra.  In particular,  $K_1,\dots, K_n$ satisfy conditions (A1) and (A2) from Section \ref{s1}. 

From now on,  $p_1,\dots, p_n$ will denote the canonical momenta corresponding to the coordinates $u^1,\dots, u^n$. Since $\pi$ is a diffeomorphism,  $\mathsf X_0$ can be represented as a graph; that is,  
$$
p_1 = g_1 (u), \dots,  p_n= g_n(u),
$$ 
in other words, the relations $F_i (x,p)= c_i^0$ can be resolved with respect to $p$.   Consider the differential $1$-form
$$
\alpha = g_1(u) \ddd u^1 + \dots + g_n(u) \ddd u^n
$$ 
on $\mathsf M^n$, which is nothing else but the pull-back $\alpha = {\pi^{-1}}^* \bigl( p_i\,\ddd u^i\bigr)$ of the canonical action form from $\mathsf X_0 \subset T^*\mathsf M^n$ to $\mathsf M^n$. 

First we discuss purely algebraic properties of $K_i$ at a fixed tangent space $V=T_x\mathsf M$.   However, this point is arbitrary so that these properties will hold on every tangent space.

Consider the operator fields $M^1,\dots, M^n$ forming the basis in $\mathfrak A(x)$ dual to $K_1,\dots, K_n$ in the sense of the Frobenius form $b$.  We are in the situation studied in Section \ref{s1} so that all the formulas from this section work (we keep the same notation).  By \eqref{eq:n18},  we have $M^i = b^{ij} K_j$, $K_i=b_{ij} M^j$,  where  $b_{ij}=b(K_i, K_j)$ is the inverse of 
$b^{ij}= (h_1)^{ij}$.

\begin{Lemma}\label{l2}
The following formulas hold
\begin{equation}\label{form1}
   (M^j)^* \alpha = \tfrac{1}{2} \,\ddd u^j, \quad j = 1, \dots, n, 
    \end{equation}
    \begin{equation}\label{form2}
    M^i M^j = h^{ij}_s M^s,   
    \end{equation} 
\begin{equation}
\label{eq:b3}
(M^1 p_1 + \dots + M^n p_n)^2 = F_1 M^1 + \dots + F_n M^n.
\end{equation}
\end{Lemma}

\begin{proof} We start with formula \eqref{form2}, which describes the structure constants $a^{ij}_m$ of $\mathfrak A$ in the dual basis $M^1, \dots, M^n$.  We need to check that they coincide with 
$h^{ij}_m$ defined, in terms of the basis $K_1,\dots, K_n$, as follows
$h^{ij}_m =  b^{i s} (K_m)^j_s$. 
Using \eqref{eq:n17} and \eqref{killing}, we get
$$
a^{ij}_m \overset{\eqref{eq:n17}}{=} b^{i s} a_{sm}^j \overset{\eqref{killing}}{=} b^{i s} (K_m)^j_s = h^{ij}_m,
$$
as required.
We also notice that due to \eqref{killing} we have
\begin{equation}\label{eq:b2}
(M^k)^j_i = g^{k q} (K_i)^j_q = h^{k j}_i.    
\end{equation}

From \eqref{form2}  we immediately see that
$$
(p_1 M^1 + \dots + p_n M^n)^2 =  M^i M^j p_i p_j = h^{ij}_k p_i p_j M^m = 
F_1 M^1 + \dots + F_n M^n,
$$
which proves \eqref{eq:b3}.

Finally to prove \eqref{form1}, consider the projection of the Hamiltonian vector field of $F_i$ onto the base  $\mathsf M$.  The $k$-th coordinate of this projection is $\dot u^k = \frac{\partial F_i}{\partial p_k} = 2 (h_i)^{kj}p_j = 2(h_i)^{kj}g_j $, since  on the common level surface $\mathsf X_0$ we have $p_i = g_i$.   By construction, this projection coincides with the basis vector field 
$\partial_{u_i}$, which means that 
\begin{equation}
\label{eq:b1}
(h_i)^{kj}g_j = \delta^i_k. 
\end{equation}
Hence,
$$
\begin{aligned}
(M^j)^* \alpha = b^{j s} K_s^*( g_i \ddd u^i )
& = g_i b^{j s} K_s^* \ddd u^i  = g_i b^{j s}  (K_s)^i_m \ddd u^m = \\ &= 
g_i b^{j s}  (K_m)^i_s \ddd u^m = g_i (h_m)^{j i} \ddd u^m = \delta_m^j \ddd u^m = \ddd u^j,
\end{aligned}
$$
as stated. 
\end{proof}
 
The second part of the proof is geometric. We will show that $M^i$ are Nijenhuis operators that are (strong) symmetries of      
each other and that $\alpha$ is a common conservation law for $M^i$.   

To that end, we use the Hamilton's principal function $W(u, c)$ associated with the Poisson commuting Hamiltonians $F_1,\dots, F_n$.  We briefly recall the definition of $W(u, c)$. We consider common level surfaces $\mathsf X_c = \{ F_1(u,p)=c_1, \dots, F_n(u,p)=c_n\}$  and represent each of them as a graph over $\mathsf M$:
$$
p_1 = g_1(u,c), \dots, p_n = g_n(u,c).
$$
In other words, we resolve the system of equations defining $\mathsf X_c$ with respect to $p$. Since $\mathsf X_c$ are all Lagrangian, the canonical action form  $p\,\ddd u = \sum g_k(u,c)\ddd u^k$  restricted to $\mathsf X_c$ is closed for each $c=(c_1,\dots,c_n)\in \R^n$, and therefore there (locally) exists a function $W(u,c)$ such that $\ddd W(x,c) = \sum g_k(u,c)\ddd u^k$.  Here $\ddd W$ denotes the differential of $W(u,c)$ in the sense of variables $u=(u^1,\dots, u^n)$, while $c=(c_1,\dots, c_n)$ are treated as parameters. 
In particular, $\alpha = \ddd W(x, c^0)$ and $p_i = \frac{\partial W}{\partial u^i}$.

Formula \eqref{form1} implies that the covectors $(M^s)^* \alpha = (M^s)^* \ddd W(x, c^0)$ are linearly independent. Hence, the differential forms $(M^s)^* \ddd W(u,c)$ are linearly independent for all $c\in\R^n$ close to $c_0$. Then, by Remark \ref{p1}, there exists a unique operator field $\widetilde M = \widetilde M(u, c) \in \mathfrak A$ such that
    \begin{equation}\label{lap3}
       \widetilde M^* \alpha  = \ddd W \ \  \mbox{or, in more detail,} \ \ \widetilde M^*(u,c) \ddd W(u, c^0)  = \ddd W (u, c).
    \end{equation}
The construction of such a matrix is essentially solving a system of linear equations. This implies that it depends on coordinates and parameters in a good way: smooth or analytic, depending on the category which we are working with. The following Lemma holds. 

\begin{Lemma}\label{l3}
The operator field $\widetilde M$ defined by \eqref{lap3}  satisfies the relation
$$
\widetilde M^2  =  c_1 M^1 + \dots +  c_n M^n.
$$
\end{Lemma}

\begin{Remark}{\rm
Notice that $\widetilde M(u, c^0) = \Id$. This allows us to rewrite the above relation in the form 
\begin{equation}
\label{eq:b8}
\widetilde M = \sqrt{c_1 M^1 + \dots +  c_n M^n},
\end{equation}
where the square root is a single-valued analytic matrix function which, by definition, is the inverse function to 
the map $M \mapsto M^2$ in the neighborhood of $\Id$.   
}\end{Remark}

\begin{proof}
By definition,  $W(u, c)$ satisfies the following relations
\begin{equation}\label{rap1}
h^{jk}_i \pd{W}{u^j} \pd{W}{u^k} = c_i, \ \  \mbox{or shortly} \ \ h_i\left( \ddd W, \ddd W \right)  = c_i, \quad i = 2, \dots, n.    
\end{equation}
As $\widetilde M \in \mathfrak A$, the operator $\widetilde M^*$ is self-adjoint with respect to $h_i$. Hence, \eqref{rap1} can be written as
\begin{equation}\label{rap2}
h_i\bigl((\widetilde M^2)^* \alpha , \alpha\bigr) = h_i ( \widetilde M^* \alpha, \widetilde M^*\alpha )= h_i (\ddd W, \ddd  W)  = c_i, \quad i = 1, \dots, n.
\end{equation}
Using \eqref{eq:b1}, we obtain 
$$
c_i = h_i\bigl((\widetilde M^2)^* \alpha , \alpha\bigr) = h^{jk}_i (\widetilde M^2)_j^s g_s g_k = 2\delta^j_i (\widetilde M^2)_\alpha^s g_s  = 2(\widetilde M^2)_i^s g_s, 
$$
which implies
\begin{equation}
\label{eq:b6}
2 (\widetilde M^2)^* \alpha = c_1 \ddd u^1 + \dots + c_n \ddd u^n.
\end{equation}

On the other hand, since $\widetilde M^2 \in \mathfrak A$ and $M^1,\dots, M^n$ form a basis in $\mathfrak A$,   we have
\begin{equation}
\label{eq:b7}
\widetilde M^2 = q_1 M^1 + \dots + q_n M^n
\end{equation}
for some smooth functions $q_i=q_i(u,c)$, and using \eqref{eq:b2}  we obtain
\begin{equation}
\label{eq:b5}
2 (\widetilde M^2)^* \alpha =  
2  q_1 \bigl((M^1)^*\alpha\bigr) + \dots + 2 q_n \bigl((M^n)^* \alpha\bigr) =   
q_1 \ddd u^1 + \dots + q_n \ddd u^n.
\end{equation}

Now, from \eqref{eq:b6} and \eqref{eq:b5}, we see that $q_i = c_i$ and formula \eqref{eq:b7} coincide with the statement of the Lemma. \end{proof}

The remaining part of the proof  basically follows from the differentiation of the relation $\ddd W  = \widetilde M^* \alpha$ with respect to the parameters $c_i$ at the point $c=c^0$. We use the fact that in the right hand side,  only $\widetilde M$ depends on $c_i$, and this dependence is defined by \eqref{eq:b8}, i.e.,
$\widetilde M = \sqrt{c_1M^1+\dots + c_nM^n}$. Since $\widetilde M|_{c=c_0}=\Id$ and $M^i$ pairwise commute, we get 
$$
\frac{\partial \widetilde M}{\partial c_i}|_{c=c_0} = \frac{1}{2}  M^i,  \quad
\frac{\partial^2 \widetilde M}{\partial c_i\partial c_j}|_{c=c_0} = -\frac{1}{4}  M^i M^j, \quad
\frac{\partial^3 \widetilde M}{\partial c_i\partial c_j \partial c_k}|_{c=c_0} = \frac{3}{8}  M^i M^j M^k.      
$$
Using these relations and the formula \eqref{form1}, we obtain
$$
\ddd  \left( \frac{\partial^2  W}{\partial c_i\partial c_j}|_{c=c_0} \right) = \frac{\partial^2 }{\partial c_i\partial c_j}|_{c=c_0} \ddd W =  
\frac{\partial^2}{\partial c_i\partial c_j}|_{c=c_0} \Bigl( \widetilde M^* \alpha \Bigr) = -\frac{1}{4}  {M^i}^* {M^j}^* \alpha = 
-\frac{1}{8} {M^i}^* \ddd u^j.
$$
This shows that $\ddd u^j$ are common conservation law for all $M^i$. Similarly,
$$
\ddd \left(\frac{\partial^3 W}{\partial c_i \partial c_j \partial_k} \vert_{c = 0}\right) = \frac{3}{8} (M^i)^* (M^j)^* (M^k)^* \ddd f = \frac{3}{16} (M^i)^* (M^j)^* \ddd u^k.
$$
Thus,  $\ddd u^j$ are also conservation laws for all pairwise products of $M^l M^j$. Lemma \ref{dfg3} implies that all $M^i$ are strong symmetries of each other. In particular, they are all Nijenhuis operators.

It remains to recall that $\alpha = \ddd W(u, c_0)$ is a closed form, which is a common conservation law for $M^i$ due to \eqref{form1}.  Since $p_i$ are momenta dual to the forms $\ddd u^i = 2(M^i)^*\alpha$,  we conclude that all the ingredients of \eqref{eq:b3} satisfy the assumptions of the direct procedure, which completes the proof.
\end{proof}

Notice that the symmetry algebra  $\Sym(M^1,\dots,M^n)$ can be reconstructed from the Killing tensors $K_1,\dots,K_n$ by a simple and purely algebraic procedure.  As before, we set $K_i\cdot K_j = \sum_s a_{ij}^s K_s$.
Following the initial procedure, we may choose a collection of constants $a_1, \dots, a_n\in\R$ for which the bilinear form $\bar b_{ij} = a^s_{ij} a_s$ is non-degenerate. 
\begin{Corollary}
In the assumptions of Theorem \ref{t6}, consider the operators $\bar M^i$  that are dual to $K_s$  with respect to the form $\bar b_{ij}$ in the sense of Theorem \ref{t2}, that is
$$
\bar M^i = \bar b^{is} K_s.
$$
They are related to $M_i$ constructed in the proof of Theorem \ref{t6} as follows:
$$
\bar M^i = (a_1 M^1+\dots + a_nM^n)^{-1} M^i.
$$
In particular, $\bar M^1, \dots, \bar M^n$ are all Nijenhuis operators, forming a basis of the same symmetry algebra $\Sym$, i.e., $\Sym(\bar M^1,\dots,\bar M^n)=\Sym(M^1,\dots, M^n)$.
\end{Corollary}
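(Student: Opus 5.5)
The plan is to compare the two dual bases pointwise, as elements of the algebra $\mathfrak A(x)=\Span(K_1(x),\dots,K_n(x))$, using only the algebra of Section \ref{s1}. Then I will deduce the geometric conclusions from Theorem \ref{t3}.

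Recall the setting of the proof of Theorem \ref{t6}. The basis $M^i=b^{is}K_s$ is dual to $K_s$ with respect to $b_{ij}=g_{ij}=(h_1^{-1})_{ij}$. We have $K_1=\Id$, because $h_1$ itself gives $K_1=h_1h_1^{-1}$. In the dual basis $b(\cdot,\cdot)$ corresponds to the covector with components $(1,0,\dots,0)$ with respect to $K_1,\dots,K_n$. Indeed, $b_{ij}=b(K_i,K_j)=b(\Id, K_iK_j)=a^s_{ij}b(\Id,K_s)=a_{ij}^s b_{1s}$, so $b=b_e$ with $e_s=b_{1s}$. Since $K_1=\Id$, we have $b(\Id,K_s)=\langle e;K_s\rangle$, so $b=b_e$ where $e=\langle\,\cdot\,\rangle$ is the functional $e(X)=b(\Id,X)$. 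The form $\bar b$ is $\bar b(X,Y)=\langle \bar a; XY\rangle$, where $\bar a(K_s)=a_s$. Set $A=a_1M^1+\dots+a_nM^n$. Then for every $Z\in\mathfrak A(x)$ I claim $\bar a(Z)=b(A,Z)$. To check this, put $Z=K_s$: $b(A,K_s)=a_j b(M^j,K_s)=a_j\delta^j_s=a_s$. Hence $\bar b(X,Y)=b(A,XY)=b(AX,Y)$. Since $\bar b$ is nondegenerate, $A$ is invertible at each point.

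Next I verify the formula by checking duality. Put $N^i=A^{-1}M^i$. Then
$$\bar b(N^i,K_j)=b(A\,A^{-1}M^i,K_j)=b(M^i,K_j)=\delta^i_j.$$
Dual bases are unique, so $\bar M^i=N^i$. This is the only real computation, and it is short.

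For the geometric part, note that $A$ is a linear combination of the $M^i$ with constant coefficients. By Theorem \ref{t6} the $M^i$ lie in $\Sym=\Sym(M^1,\dots,M^n)$, so $A\in\Sym$. I expect the main obstacle to be the claim $A^{-1}\in\Sym$. My first attempt is to use Lemma \ref{lem:2.1}: $A^{-1}$ is a polynomial in $A$ by Cayley--Hamilton, but the coefficients are functions, so this does not suffice directly. Instead I would argue as in the proof of Theorem \ref{t5}. Near a point, write $A=A_0(\Id+(A_0^{-1}A-\Id))$ and use an analytic matrix-function expansion. Alternatively, I would use the general identity $\langle A^{-1},L\rangle=-A^{-1}\langle A,L\rangle A^{-1}$-type formula, which follows from the derivation property of strong symmetries. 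Then $A^{-1}$ is a strong symmetry of each $M^i$. This is a standard fact, since the strong symmetries of $L$ form an algebra closed under inversion of invertible elements (cf.\ Lemma 2.1 in \cite{st}).

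Once $A^{-1}\in\Sym$ is established, each $\bar M^i=A^{-1}M^i\in\Sym$, because $\Sym$ is closed under products. At each point the $\bar M^i$ span $\mathfrak A(x)=\Sym_x$, since $A^{-1}$ is invertible. By Theorem \ref{t3}(2) all elements of $\Sym$ are strong symmetries of each other, so each $\bar M^i$ is Nijenhuis ($\langle\bar M^i,\bar M^i\rangle=0$). Finally, Remark \ref{rem1} gives $\Sym(\bar M^1,\dots,\bar M^n)=\Sym(M^1,\dots,M^n)$.
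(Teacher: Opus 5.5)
Your proof is correct, and it reaches the key formula by a different route than the paper.

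\textbf{The two routes.} The paper works in the coordinates $u^1,\dots,u^n$ from the proof of Theorem~\ref{t6}. It sets $\beta=a_1\,\ddd u^1+\dots+a_n\,\ddd u^n$ and checks $(\bar M^i)^*\beta=\ddd u^i$ using $\partial_{u^j}=K_j\partial_{u^1}$. Combining this with $(M^i)^*\alpha=\tfrac12\ddd u^i$ gives $2A^*\alpha=\beta$ for $A=a_1M^1+\dots+a_nM^n$, and the paper then cancels the generic covector $\beta$.

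You instead argue pointwise in $\mathfrak A(x)$. Since $K_1=\Id$, both $b$ and $\bar b$ have the form $(X,Y)\mapsto\langle c\,;XY\rangle$. Your identity $\bar b(X,Y)=b(AX,Y)$ then says that changing the Frobenius form amounts to multiplying by $A$, so the dual bases differ by $A^{-1}$.

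\textbf{What each buys.} Your argument does not use $\alpha$, the special coordinates or the Hamiltonians; it is a general fact about two Frobenius forms on one algebra. It also proves that $A$ is invertible, which the paper uses tacitly when it writes $(a_1{M^1}^*+\dots+a_n{M^n}^*)^{-1}$. The paper's route gives a by-product: $\beta$ is a common conservation law of the $\bar M^i$ with $(\bar M^i)^*\beta=\ddd u^i$. So the pair $(\bar M^i,\beta)$ can be fed directly into Theorem~\ref{t5} with the same coordinates.

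\textbf{The step $A^{-1}\in\Sym$.} The paper leaves this part of the ``in particular'' claim implicit. Keep your second option and drop the series expansion around $A_0$: that route needs $A_0^{-1}\in\Sym$, which is the very point at issue. Also, Lemma~2.1 of \cite{st}, as quoted in the paper, covers only products, not inverses.

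What you need is the Leibniz-type identity $\langle L,MN\rangle(\xi,\eta)=\langle L,M\rangle(\xi,N\eta)+M\langle L,N\rangle(\xi,\eta)$, valid whenever $LM=ML$. Put $N=M^{-1}$ and use $\langle L,\Id\rangle=0$. This gives $\langle L,M^{-1}\rangle(\xi,\eta)=-M^{-1}\langle L,M\rangle(\xi,M^{-1}\eta)$. Hence $A^{-1}$ is a common strong symmetry of the $M^i$, and the rest of your argument goes through.

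\textbf{A minor slip.} The functional defining $b$ has components $(1,0,\dots,0)$ in the basis $M^1,\dots,M^n$, not in $K_1,\dots,K_n$, where they are $b_{1s}$. You never use this, so it does not affect the proof.
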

\begin{proof}
Consider the $1$-form $\beta = a_1\dd u^1 + \dots + a_n\dd u^n$, where $u^i$ denote the coordinates used in the proof of Theorem \ref{t6}.  Then we have
$$
\langle \bar {M^i}^*\beta; \partial_j \rangle = \langle \bar {M^i}^*\beta; K_j \partial_1 \rangle = \langle \beta; \bar b^{i \beta}K_\beta K_j \partial_1 \rangle = \langle \beta; \bar b^{i \beta} a_{\beta j}^s \partial_s \rangle = \bar b^{i \beta} a_{\beta j}^s \beta_s = \bar b^{i \beta} \bar b_{\beta j} = \delta^i_j
$$
This implies that $\bar {M^i}^* \beta = \ddd u^i$. At the same time,  by Lemma \ref{l2} we have  ${M^i}^*\alpha = \frac{1}{2}\dd u^i$ and therefore 
$$
2(a_1 {M^1}^* + \dots + a_n {M^n}^*)\alpha = \beta   \quad \mbox{or, equivalently,} \quad  2\alpha = (a_1 {M^1}^* + \dots + a_n {M^n}^*)^{-1} \beta.
$$
We write the formula
$$
{M^i}^* (a_1 {M^1}^* + \dots + a_n {M^n}^*)^{-1} \beta = {M^i}^* (2\alpha) = \ddd u^i = \bar {M^i}^* \beta.
$$
We rewrite this as
$$
\Big(\bar {M^i}^* - {M^i}^* (a_1 {M^1}^* + \dots + a_n {M^n}^*)^{-1}\Big) \beta = 0.
$$
As $M^i, \bar M^j$ lie pointwise in the same Frobenius algebra and $\beta$ is a generic covector, we conclude that  $\bar M^i = M^i (a_1 M^1 + \dots + a_n M^n)^{-1}$, as required.
\end{proof}


\vspace{1ex}
\noindent{\bf Data sharing  }  is not applicable to this article, as no datasets were generated or analysed during the current study. The authors declare {\bf  no conflicts of interest}.

\printbibliography

@article{LPG2024,
  author        = {Lorenzoni, P.  and  Perletti, S.  and  van Gemst, K},
  title         = {The generalised hodograph method for non-diagonalisable integrable systems of hydrodynamic type},
  journal       = {arXiv:2410.20925},
  year          = {2024},
  %eprint        = {2410.20925},
  %archivePrefix = {arXiv},
  %primaryClass  = {nlin.SI},
  DOI           = {10.48550/arXiv.2410.20925}
}

@article {KM1980,
    AUTHOR = {Kalnins, E. G. and Miller,  W. jun.},
     TITLE = {Killing tensors and variable separation for
              {H}amilton-{J}acobi and {H}elmholtz equations},
   JOURNAL = {SIAM J. Math. Anal.},
  FJOURNAL = {SIAM Journal on Mathematical Analysis},
    VOLUME = {11},
      YEAR = {1980},
    NUMBER = {6},
     PAGES = {1011--1026},
      ISSN = {0036-1410},
   MRCLASS = {53B20 (35C99 70H20)},
  MRNUMBER = {595827},
MRREVIEWER = {A.\ P.\ Stone},
       URL = {https://doi.org/10.1137/0511089},
}

@article{Lax,
    AUTHOR = {Lax, P. D.},
     TITLE = {Hyperbolic systems of conservation laws. {II}},
   JOURNAL = {Comm. Pure Appl. Math.},
  FJOURNAL = {Communications on Pure and Applied Mathematics},
    VOLUME = {10},
      YEAR = {1957},
     PAGES = {537--566},
      ISSN = {0010-3640,1097-0312},
   MRCLASS = {35.00},
  MRNUMBER = {93653},
MRREVIEWER = {M.\ A.\ Hyman},
       DOI = {10.1002/cpa.3160100406},
       URL = {https://doi.org/10.1002/cpa.3160100406},
}

@book {LeVe,
    AUTHOR = {LeVeque, R. J.},
     TITLE = {Finite volume methods for hyperbolic problems},
    SERIES = {Cambridge Texts in Applied Mathematics},
 PUBLISHER = {Cambridge University Press, Cambridge},
      YEAR = {2002},
     PAGES = {xx+558},
   MRCLASS = {65-01 (65M06 74S30 76M12)},
  MRNUMBER = {1925043},
MRREVIEWER = {Serge\ Piperno},
       DOI = {10.1017/CBO9780511791253},
       URL = {https://doi.org/10.1017/CBO9780511791253},
}

@article{Riemann,
  author       = {Riemann, B.},
  title        = {Über die Fortpflanzung ebener Luftwellen von endlicher Schwingungsweite},
  journal      = {Abhandlungen der K{\"o}niglichen Gesellschaft der Wissenschaften zu G{\"o}ttingen},
  volume       = {8},
  pages        = {43--66},
  year         = {1860},
  language     = {german}
}

@article {ManinHertling,
    AUTHOR = {Hertling, C. and Manin, Yu.},
     TITLE = {Weak {F}robenius manifolds},
   JOURNAL = {Internat. Math. Res. Notices},
  FJOURNAL = {International Mathematics Research Notices},
      YEAR = {1999},
    NUMBER = {6},
     PAGES = {277--286},
      ISSN = {1073-7928,1687-0247},
   MRCLASS = {53D45},
  MRNUMBER = {1680372},
MRREVIEWER = {Alexandre\ I.\ Kabanov},
       DOI = {10.1155/S1073792899000148},
       URL = {https://doi.org/10.1155/S1073792899000148},
}

@article {KoPa,
    AUTHOR = {Kokkinos, D. and Papakostas, T.},
     TITLE = {The study of the canonical forms of {K}illing tensor in vacuum
              with {$\Lambda$}},
   JOURNAL = {Gen. Relativity Gravitation},
  FJOURNAL = {General Relativity and Gravitation},
    VOLUME = {56},
      YEAR = {2024},
    NUMBER = {11},
     PAGES = {Paper No. 134, 40},
      ISSN = {0001-7701,1572-9532},
   MRCLASS = {83C20 (83C60)},
  MRNUMBER = {4819936},
MRREVIEWER = {Nikolaos\ Dimakis},
       DOI = {10.1007/s10714-024-03321-w},
       URL = {https://doi.org/10.1007/s10714-024-03321-w},
}

@book {Hertlingbook,
    AUTHOR = {Hertling, C.},
     TITLE = {Frobenius manifolds and moduli spaces for singularities},
    SERIES = {Cambridge Tracts in Mathematics},
    VOLUME = {151},
 PUBLISHER = {Cambridge University Press, Cambridge},
      YEAR = {2002},
     PAGES = {x+270},
      ISBN = {0-521-81296-8},
   MRCLASS = {32S40 (14B05 34M35 53D45)},
  MRNUMBER = {1924259},
MRREVIEWER = {Ezra\ Getzler},
       DOI = {10.1017/CBO9780511543104},
       URL = {https://doi.org/10.1017/CBO9780511543104},
}

@article {FeFo,
    AUTHOR = {Ferapontov, E. V. and Fordy, A. P.},
     TITLE = {Separable {H}amiltonians and integrable systems of
              hydrodynamic type},
   JOURNAL = {J. Geom. Phys.},
  FJOURNAL = {Journal of Geometry and Physics},
    VOLUME = {21},
      YEAR = {1997},
    NUMBER = {2},
     PAGES = {169--182},
      ISSN = {0393-0440,1879-1662},
   MRCLASS = {58F05 (58F07 70H05)},
  MRNUMBER = {1427864},
MRREVIEWER = {Efthymia\ Meletlidou},
       DOI = {10.1016/S0393-0440(96)00013-7},
       URL = {https://doi.org/10.1016/S0393-0440(96)00013-7},
}

@article{Eisenhart34,
    AUTHOR = {Eisenhart, L. P.},
     TITLE = {Separable systems of {S}tackel},
   JOURNAL = {Ann. of Math. (2)},
  FJOURNAL = {Annals of Mathematics. Second Series},
    VOLUME = {35},
      YEAR = {1934},
    NUMBER = {2},
     PAGES = {284--305},
      ISSN = {0003-486X},
   MRCLASS = {DML},
  MRNUMBER = {1503163},
       URL = {https://doi.org/10.2307/1968433},
}

@book {Serre2,
    AUTHOR = {Serre, D.},
     TITLE = {Systems of conservation laws. 1},
      NOTE = {Hyperbolicity, entropies, shock waves,
              Translated from the 1996 French original by I. N. Sneddon},
 PUBLISHER = {Cambridge University Press, Cambridge},
      YEAR = {1999},
     PAGES = {xxii+263},
      ISBN = {0-521-58233-4},
   MRCLASS = {35L65 (35B35 35D05 35L67)},
  MRNUMBER = {1707279},
       DOI = {10.1017/CBO9780511612374},
       URL = {https://doi.org/10.1017/CBO9780511612374},
}

@incollection {Serre1,
    AUTHOR = {Serre, D.},
     TITLE = {Systems of conservation laws: a challenge for the {XXI}st
              century},
 BOOKTITLE = {Mathematics unlimited---2001 and beyond},
     PAGES = {1061--1080},
 PUBLISHER = {Springer, Berlin},
      YEAR = {2001},
      ISBN = {3-540-66913-2},
   MRCLASS = {35L65},
  MRNUMBER = {1852203},
}

@book {Serre3,
    AUTHOR = {Serre, D.},
     TITLE = {Systems of conservation laws. 2},
      NOTE = {Geometric structures, oscillations, and initial-boundary value
              problems,
              Translated from the 1996 French original by I. N. Sneddon},
 PUBLISHER = {Cambridge University Press, Cambridge},
      YEAR = {2000},
     PAGES = {xii+269},
      ISBN = {0-521-63330-3},
   MRCLASS = {35L65 (35L67)},
  MRNUMBER = {1775057},
}

@article{st,
  author        = {Bolsinov, A. V. and Konyaev, A. Yu. and Matveev, V. S.},
  title         = {St{\"a}ckel problem for non-diagonal Killing tensors: Yano-Patterson lifts, algebra of strong symmetries and quadratic in momenta integrals},
  journal       = {arXiv:2512.17609},
  year          = {2025},
 % eprint        = {2512.17609},
  %archivePrefix = {arXiv},
  %primaryClass  = {nlin.SI},
  doi           = {10.48550/arXiv.2512.17609},
 % url           = {https://arxiv.org/abs/2512.17609}
}

@article {staeckel,
AUTHOR ={St\"ackel, P.},
TITLE = {Die Integration der Hamilton-Jacobischen Differentialgleichung mittelst Separation der Variablen},
JOURNAL = {Habilitationsschrift, Universit\"at Halle}, 
Year= {1891}
}

@book{Dubrovin1,
    AUTHOR = {Donagi, R. and Dubrovin, B. and Frenkel, E. and Previato, E.},
     TITLE = {Integrable systems and quantum groups},
    SERIES = {Lecture Notes in Mathematics},
    VOLUME = {1620},
    EDITOR = {Francaviglia, M. and Greco, S.},
      NOTE = {Lectures given at the First 1993 C.I.M.E. Session held in
              Montecatini Terme, June 14--22, 1993,
              Fondazione CIME/CIME Foundation Subseries},
 PUBLISHER = {Springer-Verlag, Berlin; Centro Internazionale Matematico
              Estivo (C.I.M.E.), Florence},
      YEAR = {1996},
     PAGES = {viii+488},
      ISBN = {3-540-60542-8},
   MRCLASS = {58-06 (00B25 14-06 17-06)},
  MRNUMBER = {1397272},
       DOI = {10.1007/BFb0094791},
       URL = {https://doi.org/10.1007/BFb0094791},
}

@article {Dubrovin2,
    AUTHOR = {Dubrovin, B. and Zhang, Y.},
     TITLE = {Extended affine {W}eyl groups and {F}robenius manifolds},
   JOURNAL = {Compositio Math.},
  FJOURNAL = {Compositio Mathematica},
    VOLUME = {111},
      YEAR = {1998},
    NUMBER = {2},
     PAGES = {167--219},
      ISSN = {0010-437X,1570-5846},
   MRCLASS = {32M10 (14B07 17B20)},
  MRNUMBER = {1606165},
MRREVIEWER = {Lowell\ Abrams},
       DOI = {10.1023/A:1000258122329},
       URL = {https://doi.org/10.1023/A:1000258122329},
}

@article {BlMa,
    AUTHOR = {Blaszak, M. and Marciniak, K.},
     TITLE = {From {S}t\"{a}ckel systems to integrable hierarchies of
              {PDE}'s: {B}enenti class of separation relations},
   JOURNAL = {J. Math. Phys.},
  FJOURNAL = {Journal of Mathematical Physics},
    VOLUME = {47},
      YEAR = {2006},
    NUMBER = {3},
     PAGES = {032904, 26},
      ISSN = {0022-2488,1089-7658},
   MRCLASS = {37J35 (35Q51 37K10)},
  MRNUMBER = {2219792},
MRREVIEWER = {Giovanni\ Rastelli},

       URL = {https://doi.org/10.1063/1.2176908},
}

@article {nijenhuis,
    AUTHOR = {Nijenhuis, A.},
     TITLE = {{$X_{n-1}$}-forming sets of eigenvectors},
      NOTE = {Nederl. Akad. Wetensch. Proc. Ser. A {\bf 54}},
   JOURNAL = {Indag. Math.},
  FJOURNAL = {},
    VOLUME = {13},
      YEAR = {1951},
     PAGES = {200--212},
   MRCLASS = {53.0X},
  MRNUMBER = {43540},
MRREVIEWER = {J.\ A.\ Schouten},
}

@article {nij3,
    AUTHOR = {Bolsinov, A. V. and Konyaev, A. Yu. and Matveev,
              V. S.},
     TITLE = {Nijenhuis geometry {III}: gl-regular {N}ijenhuis operators},
   JOURNAL = {Rev. Mat. Iberoam.},
  FJOURNAL = {Revista Matem\'{a}tica Iberoamericana},
    VOLUME = {40},
      YEAR = {2024},
    NUMBER = {1},
     PAGES = {155--188},
      ISSN = {0213-2230,2235-0616},
   MRCLASS = {53C15 (35A30 35B06 35F20 35N10)},
  MRNUMBER = {4705683},
       URL = {https://doi.org/10.4171/rmi/1416}
}

@article {l,
    AUTHOR = {Arsie, A. and Buryak, A. and Lorenzoni, P.
              and Rossi, P.},
     TITLE = {Riemannian {F}-manifolds, bi-flat {F}-manifolds, and flat
              pencils of metrics},
   JOURNAL = {Int. Math. Res. Not. IMRN},
  FJOURNAL = {International Mathematics Research Notices. IMRN},
      YEAR = {2022},
       VOLUME = {2022},
    NUMBER = {21},
     PAGES = {16730--16778},
      ISSN = {1073-7928,1687-0247},
   MRCLASS = {53D45},
  MRNUMBER = {4504906},
MRREVIEWER = {Robert\ Brouzet},
       URL = {https://doi.org/10.1093/imrn/rnab203},
}

@article{BKM2026,
  author        = {A. V. Bolsinov and A. Yu. Konyaev and V. S. Matveev},
  title         = {Existence of Riemannian invariants for integrable systems of hydrodynamic type},
  journal       = { arXiv:2602.18687},
  year          = {2026},
  %eprint        = {2602.18687},
  %archivePrefix = {arXiv},
  %primaryClass  = {nlin.SI},
  DOI           = {10.48550/arXiv.2602.18687}
}

@article {tsarev,
    AUTHOR = {Tsar\"ev, S. P.},
     TITLE = {The geometry of {H}amiltonian systems of hydrodynamic type.
              {T}he generalized hodograph method},
   JOURNAL = {Izv. Akad. Nauk SSSR Ser. Mat.},
  FJOURNAL = {Izvestiya Akademii Nauk SSSR. Seriya Matematicheskaya},
    VOLUME = {54},
      YEAR = {1990},
    NUMBER = {5},
     PAGES = {1048--1068},
      ISSN = {0373-2436},
   MRCLASS = {58F07 (35Q35 58F05 76B99 76M99)},
  MRNUMBER = {1086085},
MRREVIEWER = {Galina\ G.\ Okuneva},
       DOI = {10.1070/IM1991v037n02ABEH002069},
       URL = {https://doi.org/10.1070/IM1991v037n02ABEH002069},
}

@article{Jacobi,
  author  = {Jacobi, C. G. J.},
  title   = {Note von der geod{\"a}tischen Linie auf einem Ellipsoid und den verschiedenen Anwendungen einer merkw{\"u}rdigen analytischen Substitution},
  journal = {Journal f{\"u}r die reine und angewandte Mathematik},
  volume  = {19},
  pages   = {309--313},
  year    = {1839},
  doi     = {10.1515/crll.1839.19.309}
}

@article {nij4,
    AUTHOR = {Bolsinov, A. V. and Konyaev, A.
              Yu. and Matveev,
              V. S.},
     TITLE = {Nijenhuis geometry {IV}: conservation laws, symmetries and
              integration of certain non-diagonalisable systems of
              hydrodynamic type in quadratures},
   JOURNAL = {Nonlinearity},
  FJOURNAL = {Nonlinearity},
    VOLUME = {37},
      YEAR = {2024},
    NUMBER = {10},
     PAGES = {Paper No. 105003, 27},
      ISSN = {0951-7715,1361-6544},
   MRCLASS = {53B30 (37K06 47N20 53A20 53B10 58J70)},
  MRNUMBER = {4792045},
}
\end{document}